\let\orgnonumber=\nonumber\usepackage{mathenv}\let
\newtheorem{theorem}{\indent Theorem}[section]
\newtheorem{lemma}{\indent Lemma}[section]
\newtheorem{corollary}{\indent Corollary}[theorem]
\newtheorem{proposition}{\indent Proposition}[section]
\newtheorem{definition}{\indent Definition}[section]
\newenvironment{proof}                    
       {\par\indent{\bf Proof. }} 
	        {\hfill $\scriptstyle\blacksquare$}
\begin{document}

\centerline{{\large{\bf Reflective anisotropic hyperbolic lattices
of rank $4$}}}

\vspace*{0.2cm}

\begin{center}
{\bf Bogachev N.V.$^{a}$}

%\noindent
\small{$^{a}$%{\it 
Department of Mathematics and Mechanics,
Lomonosov Moscow State University, \\ 119991 Leninskie Gory, Moscow, Russia%}
}
\end{center}

\vspace*{0.2cm}

%\begin{center}
%\parbox[r]{15cm}{
%\noindent
%{\scshape Abstract}

%\begin{small}
%\textit{
\begin{abstract}
A hyperbolic lattice is called
\textit{$1.2$-reflective} if   
the subgroup of its automorphism group 
generated by all $1$- and $2$-reflections is of finite index.  
The main result of this article is a complete
classification of $1.2$-reflective maximal anisotropic lattices of rank $4$.
\end{abstract}
%}
%\end{small}
%}
%\end{center}
%\vspace*{0.2cm}

\noindent
 {\it Key words:}
reflective hyperbolic lattices, sublattices,  groups generated by reflections,
roots, fundamental polyhedron

\tableofcontents

\section{Introduction}\label{sec}

By definition, a \textit{quadratic lattice} is a free Abelian group with an
integral symmetric bilinear form called a scalar product. A quadratic lattice $L$ is
called \textit{Euclidean}
if its scalar product is positively definite and it is called \textit{hyperbolic}
if its scalar product is a form of signature $(n, 1)$.

Let $L$ be a hyperbolic lattice. Then
$V =  L \otimes \mathbb{R} = \mathbb{E}^{n,1}$ is a Minkowski space. Hence the
group $O(L)$
of automorphisms of the lattice $L$ is a lattice in the pseudoorthogonal
group $O(V)$.
One of the connected components of the hyperboloid
$$
\{ x \in \mathbb{E}^{n,1}\colon (x, x) = -1\}
$$
 will be considered as the $n$-dimensional Lobachevsky space $\mathbb{L}^n$.
In this case, the group of
motions of $\mathbb{L}^n$ is a subgroup $O'(V)$ of index $2$ in $O(V)$.
It consists of all
transformations leaving invariant each connected component of the hyperboloid.
The planes in the vector model of the Lobachevsky space are non-empty
intersections of the hyperboloid with subspaces of $V$. The points at infinity in this
model correspond to isotropic one-dimensional subspaces of $V$.

A primitive vector $e$ of a quadratic lattice $L$ is
called a \textit{root} or, more precisely, a
$k$-\textit{root} if $(e,e)=k > 0$ and
$$
2(e, x) \in k \mathbb{Z} \quad \forall x \in L.
$$

Every root $e$ defines an orthogonal reflection (which is called
a $k$-\textit{reflection}) in the space $L \otimes \mathbb{R}$ by setting
$$
R_e \colon x \mapsto x - \dfrac{2(e, x)}{(e, e)} e,
$$
which preserves the lattice $L$. In the hyperbolic case, $R_e$ defines
a reflection on the hyperplane
$$
H_e =\{ x \in \mathbb{L}^n \colon (x,e) = 0\}
$$
in the space $\mathbb{L}^n$.

Then it is known that the group
$$
O '(L) = O (L) \cap O' (V)
$$
acts discretely on the Lobachevsky space and its fundamental polyhedron
has a finite volume.
Let $O_r (L)$ be the subgroup of $O '(L)$ generated by all
reflections contained in $O '(L)$. A hyperbolic lattice $L$ is called
\textit{reflective} if the subgroup $O_r (L)$  has a finite index in $O (L)$.
A hyperbolic lattice $L$ is called \textit{$1.2$-reflective}
if the subgroup $O^{(1.2)}_r (L)$ generated
by all $1$- and $2$-reflections  has a finite index in $O (L)$.

The lattice $L$ is reflective
if and only if the fundamental
polyhedron $M$ of the group $O_r (L)$ has a finite
volume in the Lobachevsky space $\mathbb{L}^n$.
The lattice $L$ is $1.2$-reflective
if and only if the fundamental
polyhedron $M ^ {(1.2)}$ of the group $O^{(1.2)}_r (L)$) has a finite
volume in $\mathbb{L}^n$.

There is an algorithm that, given a lattice $L$, enables
one to find recursively all faces of the polyhedron $M$ and determine
if there are only finitely many of them (\cite{vinberg73descrgr, vinberg72unitgr}).

A hyperbolic lattice $L$ is called \textit{isotropic} if the corresponding
quadratic form represents zero, otherwise it is called \textit{anisotropic}.

E.B. Vinberg (\cite{vinberg07classhyplat}) classified all $2$-reflective
hyperbolic lattices of rank $4$.
V.V.~Nikulin (\cite{nikulin81factor, nikulin84spacek3}) classified all
$2$-reflective hyperbolic lattices of
rank not equal to $4$, and  in \cite{nikulin2000hyp-root-sys}
he found all maximal reflective hyperbolic lattices of rank $3$.
Subsequently, D.~Allcock in his paper \cite{allcock2012refl-lat-3} classified
all reflective lattices of rank $3$.
In \cite{scharlauwalhorn92}, R.~Scharlau and C.~Walhorn presented  a
hypothetic list of all
maximal groups of the form $O_r (L)$, where $L$ is a reflective
isotropic hyperbolic lattice of rank $4$.

Our goal is to find all \textit{maximal} arithmetic groups generated
by $1$- and $2$-reflections. This means that we are looking for
$1.2$-reflective hyperbolic lattices $L$ of rank $4$ such that
the subgroup $O^{(1.2)}_r (L)$ is not contained in any other
arithmetic group generated by $1$- and $2$-reflections.
So we can assume that the lattice $L$ is \textit{maximal} (in particular,
its invariant factors are square-free), because
the group generated by $1$- and $2$-reflections can only increase by
passing to a superlattice ($1$- and $2$-roots always define a reflection).

In this paper, we confine ourselves to finding all maximal anisotropic
$1.2$-reflective hyperbolic lattices $L$ of rank $4$.

To state the main result of this paper we introduce some notation:

\begin{itemize}
\item  $[C]$ denotes  the quadratic lattice with the scalar product
given be a symmetric matrix $C$  in some basis,

\item $d(L)$ denotes  the discriminant of the lattice $L$,

\item $L \oplus M$ denotes the orthogonal sum of two lattices of $ L $ and $ M $,

\item $ [k]L $ denotes  the quadratic lattice obtained from  $L$ by multiplying
the scalar product by $k \in \mathbb{Z} $.
\end{itemize}

The main result of this paper is this.

\begin{theorem}\label{osn-res}
All maximal $1.2$-reflective anisotropic hyperbolic  lattices of rank $4$ are
presented in the following table:
\vskip 0.5 cm
\begin{small}
\begin{tabular} {| c | c | c | c | c |}
\hline
$L$ & Invariant factors & Discriminant \\
\hline
$[- 7] \oplus [1] \oplus [1] \oplus [1]$ & $(1, 1, 1, 7)$ &  $-7$ \\
\hline
$[- 15] \oplus [1] \oplus [1] \oplus [1]$ & $(1, 1, 1, 15)$ & $-15$ \\
\hline
\end{tabular}
\end{small}
\vskip 0.5 cm

These lattices are in fact  $2$-reflective, as proved in \cite{vinberg07classhyplat}.
\end{theorem}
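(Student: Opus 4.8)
The plan is to split the statement into an easy inclusion and a hard finiteness argument, and to resolve the latter with Vinberg's algorithm restricted to $1$- and $2$-roots.

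Before anything, I would record the structural features peculiar to the anisotropic rank-$4$ case. Since the signature is $(3,1)$ and the rank is $4$, the form sits exactly at the threshold of Meyer's theorem (an indefinite rational form in at least five variables represents zero), so anisotropy is possible at all only here. Anisotropy means $L$ has no isotropic vectors, hence $O^{(1.2)}_r(L)$ has no rational ideal vertices; consequently, whenever its covolume is finite the fundamental polyhedron $M^{(1.2)}$ is compact. I would use this compactness as the main geometric handle. Moreover, maximality together with the square-free invariant factors noted before the statement pins $L$ down to a maximal quadratic lattice, so that $L$ is determined up to isometry by its genus, i.e.\ by $d(L)$ and the local Hasse invariants; the anisotropy requirement is itself a conjunction of local conditions and already constrains the admissible discriminants.

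For the easy inclusion I would simply observe that a $2$-reflective lattice is automatically $1.2$-reflective, since adjoining $1$-reflections can only enlarge the group. Hence each anisotropic maximal member of Vinberg's complete list of $2$-reflective rank-$4$ lattices \cite{vinberg07classhyplat} is a valid entry, and extracting those members yields exactly $[-7]\oplus[1]\oplus[1]\oplus[1]$ and $[-15]\oplus[1]\oplus[1]\oplus[1]$. The substance of the theorem is therefore the converse: no further maximal anisotropic lattice becomes $1.2$-reflective through the use of $1$-reflections. To prove this I would first bound $|d(L)|$. Invoking the Vinberg--Nikulin machinery for reflective lattices of rank $4$ — a reflective lattice has a Coxeter fundamental polyhedron of controllably bounded combinatorial type — together with arithmeticity and the compactness above, one obtains an explicit upper bound on the discriminant, leaving only finitely many genera of maximal anisotropic candidates. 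For each candidate I would then run Vinberg's algorithm \cite{vinberg72unitgr, vinberg73descrgr}: fix a controlling vector $v_0$, enumerate $1$- and $2$-roots in order of increasing distance from $v_0$, greedily retain those mirrors bounding the chamber (the condition $(e_i,e_j)\le 0$), read off the Coxeter diagram, and apply Vinberg's finiteness criterion to decide whether $M^{(1.2)}$ closes up to a compact finite-volume polytope.

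The main obstacle I anticipate is the negative half of this case analysis: certifying that a candidate is \emph{not} $1.2$-reflective cannot be done by a terminating run of the algorithm, so for each such lattice I would need an independent obstruction — an infinite chain of essential mirrors, a subdiagram forcing infinitely many faces, or a violated lower volume bound — to rule out finite covolume. Once every candidate outside the two listed lattices has been eliminated this way, and the non-maximal survivors discarded (which is automatic, since $1$- and $2$-roots always reflect, so passing to a superlattice only enlarges $O^{(1.2)}_r$), the classification is complete; the concluding remark that these two are in fact $2$-reflective then follows from their provenance in \cite{vinberg07classhyplat}.
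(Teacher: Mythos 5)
Your overall architecture (easy inclusion from Vinberg's $2$-reflective list, then a finiteness argument reducing the converse to a finite case check by Vinberg's algorithm) matches the paper's in outline, but the central step is missing: you assert that ``the Vinberg--Nikulin machinery \ldots together with arithmeticity and the compactness above'' yields ``an explicit upper bound on the discriminant,'' and this is precisely the part that has no off-the-shelf justification and constitutes the technical core of the paper. The paper obtains finiteness not by bounding $d(L)$ directly but by the \emph{method of the outermost edge}: for an interior point $e_0$ of the fundamental polyhedron $M^{(1.2)}$ one takes the edge $E$ farthest from $e_0$ (anisotropy forces both its vertices to be simple), lets $e_1,e_2$ be the normals to the two faces containing $E$ and $e_3,e_4$ the normals to the two framing faces, and applies Nikulin's bound $-(u,v)\le 14$ (Theorem \ref{nik-th}) to control $(e_3,e_4)$, with all other Gram entries forced to be $0$ or $1$ since the corresponding faces intersect. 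This bound is then sharpened case by case using $\det G(e_0,e_1,e_2,e_3,e_4)=0$ together with the distance inequalities of Proposition \ref{oz-rasst-zel-matr-gr}. Only then does one get finitely many Gram matrices, hence finitely many sublattices $L'=\langle e_1,\dots,e_4\rangle$, and the candidates for $L$ are recovered as finite-index extensions via $[L:L']^2\le |d(L')|$. Without some argument of this kind your ``finitely many genera of maximal anisotropic candidates'' is an unsupported claim, and the rest of the proof has nothing to iterate over.

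A secondary gap concerns the elimination step. You correctly anticipate that non-reflectivity cannot be certified by a terminating run of the algorithm, but two of the paper's ten candidates ($L(5)$ and $L(10)$) are in fact \emph{reflective}: Vinberg's algorithm terminates with a compact Coxeter polytope, and non-$1.2$-reflectivity is then established by the ``bad reflections'' criterion ($L$ is $1.2$-reflective iff it is reflective and the group generated by the reflections in roots of norm other than $1$ or $2$ is finite), which fails because the relevant subdiagram contains a dotted edge. Your plan of running the algorithm restricted to $1$- and $2$-roots would not terminate on these lattices and gives no obstruction; you would need either the bad-reflections lemma or some substitute. For the genuinely non-reflective candidates the paper uses Bugaenko's dimension-reduction theorem against Nikulin's rank-$3$ list (and citations to Mark and McLeod), which is the kind of ``independent obstruction'' you gesture at, so that part of your plan is at least pointed in the right direction.
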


The author expresses his deep gratitude to
E.B.~Vinberg for his advice, help and attention.

\section{The method of the outermost edge}

In this paper we employ a new method of the outermost edge,
which
is a modification of the method used by V.V.~Nikulin in his papers
\cite{nikulin81klassargr} and \cite{nikulin2000hyp-root-sys}.

In  \cite{nikulin81klassargr}, V.V.~Nikulin proved the following assertion.

\begin{theorem}\label{nik-th}
Let $\mathbb{L}^n$ be  the $n$-dimensional Lobachevsky space, let $M$ be an
acute-angled convex polyhedron in~$\mathbb{L}^n$, and let $e_0$ be a
fixed interior point of~$M$. If $F$ is the face
of the polyhedron $M$ of codimension $1$ that is outermost
from $e_0$, then,
for any faces $F_u$ and $F_v$ (of codimension $1$) of the polyhedron
$M$ adjacent to $F$   and having external normals $u$ and $v$, respectively, we have
$$
- (u, v) \le 14.
$$
\end{theorem}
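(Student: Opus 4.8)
The plan is to pass to the vector model and reduce the whole statement to an elementary estimate for the Gram data of three spacelike vectors in $\mathbb{E}^{n,1}$. First I would normalize the outward unit normals $f,u,v$ of $F,F_u,F_v$ so that $(f,f)=(u,u)=(v,v)=1$ and rewrite each hypothesis as an inner-product inequality. Acute-angledness together with the adjacency of $F_u,F_v$ to $F$ (so that the supporting hyperplanes genuinely meet in a codimension-$2$ face) gives, via the dihedral-angle relation $(f,u)=-\cos\varphi_u$ with $\varphi_u\le \pi/2$, that $\alpha:=-(f,u)\in[0,1)$ and $\beta:=-(f,v)\in[0,1)$. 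Scaling $e_0$ to $(e_0,e_0)=-1$, interiority reads $(e_0,e)<0$ for every outward normal $e$, and the distance formula $\sinh\operatorname{dist}(e_0,H_e)=|(e_0,e)|$ turns ``$F$ is outermost'' into $-(e_0,u)\le -(e_0,f)=:t$ and $-(e_0,v)\le t$, where $t=\sinh\rho>0$ for $\rho=\operatorname{dist}(e_0,H_f)$.

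The decisive step is to decompose along $f$. I would work in $f^{\perp}$, which has signature $(n-1,1)$, writing $e_0=-tf+w$, $u=-\alpha f+u'$, $v=-\beta f+v'$ with $w,u',v'\in f^{\perp}$, so that $(w,w)=-(1+t^2)$, $(u',u')=1-\alpha^2$, $(v',v')=1-\beta^2$. Let $p=w/\sqrt{1+t^2}$ be the unit timelike foot of the perpendicular from $e_0$ to $H_f$, and put $s=\tanh\rho=t/\sqrt{1+t^2}\in(0,1)$. A direct computation then converts the two geometric constraints into two-sided bounds on the $p$-components $q_u=(p,u')$ and $q_v=(p,v')$: interiority forces $q_u<-s\alpha\le 0$ and $q_v<-s\beta\le 0$, while outermost-ness forces $q_u\ge -s(1+\alpha)$ and $q_v\ge -s(1+\beta)$. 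The point I would emphasise is that both hypotheses are genuinely used here: interiority supplies an upper bound (the sign $q_u<0$) and outermost-ness the matching lower bound, and only together do they trap $|q_u|\le s(1+\alpha)<1+\alpha$ and $|q_v|<1+\beta$.

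To finish I would project $u',v'$ onto $p^{\perp}$ inside $f^{\perp}$, writing $u'=-q_u\,p+u^{\perp}$ and $v'=-q_v\,p+v^{\perp}$ with $u^{\perp},v^{\perp}$ in the $(n-1)$-dimensional positive-definite part; then $(u^{\perp},u^{\perp})=1-\alpha^2+q_u^2\le 2(1+\alpha)$ and similarly for $v^{\perp}$. Expanding, $-(u,v)=-\alpha\beta-(u',v')=-\alpha\beta+q_uq_v-(u^{\perp},v^{\perp})$, and Cauchy--Schwarz in the Euclidean part gives $-(u^{\perp},v^{\perp})\le \|u^{\perp}\|\,\|v^{\perp}\|\le 2\sqrt{(1+\alpha)(1+\beta)}$. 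Using $q_uq_v\le(1+\alpha)(1+\beta)$ and the identity $-\alpha\beta+(1+\alpha)(1+\beta)=1+\alpha+\beta$, this produces the explicit universal estimate
\[
-(u,v)\ \le\ 1+\alpha+\beta+2\sqrt{(1+\alpha)(1+\beta)},
\]
whose value over $\alpha,\beta\in[0,1)$ never exceeds $7$; in particular $-(u,v)\le 14$ as claimed (the streamlined estimate thus comfortably beats the stated constant).

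I expect the only real difficulty to be bookkeeping rather than ideas: getting every sign and normalization right, above all translating ``outermost'' and ``acute-angled'' into inequalities pointing in the correct direction and using $\sinh\operatorname{dist}=|(\cdot,\cdot)|$ and $(f,u)=-\cos\varphi_u$ consistently. The one genuinely geometric ingredient in Nikulin's version of the method is the assertion that the foot $p$ of the perpendicular lies in the face $F$; I find it reassuring that the crude inner-product estimate above does not seem to need it, so I would first try to dispense with that claim altogether and resort to it only if a sharper constant were required.
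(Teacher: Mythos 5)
Your argument is essentially correct, but note first that the paper itself gives no proof of this theorem: it is quoted from Nikulin's paper \cite{nikulin81klassargr} and used as a black box, so there is nothing internal to compare against. Your self-contained derivation --- decomposing $u,v,e_0$ along the unit normal $f$ and then along the timelike foot $p$, trapping the coefficients $q_u,q_v$ between the interiority bound $q_u<-s\alpha$ and the outermost bound $q_u\ge -s(1+\alpha)$, and finishing with Cauchy--Schwarz in the residual positive-definite subspace --- checks out line by line (the normalizations $(w,w)=-(1+t^2)$ and $(u^{\perp},u^{\perp})=1-\alpha^2+q_u^2\le 2(1+\alpha)$, and the identity $-\alpha\beta+(1+\alpha)(1+\beta)=1+\alpha+\beta$, are all correct), and in the right-angled case $\alpha=\beta=0$ your bound $-(u,v)<3$ is in fact sharp for right-angled polygons, so obtaining $7$ rather than Nikulin's $14$ is plausible and in any case suffices for the statement. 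The one inaccuracy is your closing remark that the argument dispenses with the claim that the foot of the perpendicular lies in the face: it does not. To convert the hypothesis that $F$ is the outermost \emph{face} (distance from $e_0$ to the face as a subset) into the inequality $-(e_0,u)\le -(e_0,f)$ that drives the whole computation, you need the chain $\rho(e_0,H_u)\le\rho(e_0,F_u)\le\rho(e_0,F)=\rho(e_0,H_f)$, and the last equality is exactly the foot-in-face property for $F$, valid for acute-angled polyhedra and invoked by the paper itself in the proof of Proposition \ref{ud-rebro}. Two further small points: adjacent faces of an acute-angled polyhedron may be parallel (dihedral angle $0$), so you should allow $\alpha,\beta\in[0,1]$ rather than $[0,1)$ --- the supremum of $1+\alpha+\beta+2\sqrt{(1+\alpha)(1+\beta)}$ over the closed square is still $7$; and $t>0$ does require $e_0$ to be strictly interior, as you assume.
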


Note that the number $-(u, v)$ is the cosine of the angle between the
faces $F_u$ and $F_v$ if they either intersect or are parallel. If they are
divergent, then
$$
-(u,v) = \cosh \rho (F_u, F_v),
$$
where $\rho (\cdot, \cdot)$ is the distance in the Lobachevsky space.

We shall go a somewhat other way.
Let $M \subset \mathbb{L}^3 $ be the fundamental polyhedron of the group
$O^{(1.2)}_r (L)$ for a maximal anisotropic $1.2$-reflective hyperbolic lattice $L$
(of rank $4$), let $e_0$ be a fixed point inside the polyhedron $M$, and let $E$ be
{\bf the outermost edge} from this point.

Let $e_1, e_2$ be normals to the two faces containing  $E$
and let $e_3, \ldots, e_{k+3}$ be normals to the framing faces, i.e.,
to the faces containing one of
the vertices of $E$, but not containing the edge $E$.

\begin{definition}
A vertex of an $n$-dimensional convex polyhedron is called simple
if it belongs to exactly $n$ faces.
\end{definition}

Note that the outermost edge $E$
connects two vertices, say, $V_1$ and
$V_2$, and each of them can be simple or non-simple,
and the number of  the framing faces
of $E$ changes
depending on it. Namely, $k = 1$ if both vertices of  $E$ are simple,
$k = 2$ if only one vertex is simple,  and $k = 3$ if both vertices are non-simple.
We consider only anisotropic lattices, so both vertices of $E$ must be simple,
that is, $k = 1$.

The following assertions show that the scalar product (taken with the
sign ``minus'') of the normals to the framing faces is bounded by some explicit number.
Note that we consider the case where  framing
faces are divergent, since otherwise the inner product taken
with the minus sign does not exceed one.

We have the following corollary of Theorem \ref{nik-th}.

\begin{proposition} \label{ud-rebro}
Let $e_0$ be a fixed point inside the acute-angled convex polyhedron
$M \subset \mathbb{L}^3$ and let $E$ be the outermost edge from it, and let $F$
be a face
containing this edge.
Let $E_1$, $E_2$ be disjoint edges of this face coming out from
different vertices of $E$. Then $ \cosh \rho (E_1, E_2) \le 14$.
\end{proposition}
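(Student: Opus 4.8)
The plan is to deduce this from Theorem \ref{nik-th} by a geometric reduction. We are given a face $F$ of the polyhedron $M$ containing the outermost edge $E$, and two disjoint edges $E_1, E_2$ of $F$ emanating from the two distinct vertices $V_1, V_2$ of $E$. The goal is to bound $\cosh \rho(E_1, E_2)$. The natural idea is to pass from the three-dimensional configuration to a two-dimensional one living inside the face $F$ itself, since $F$ is a polygon (a two-dimensional acute-angled polyhedron) in a hyperbolic plane, and $E$, $E_1$, $E_2$ are all edges of this polygon.

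First I would set up the two-dimensional picture. The face $F$ is a convex polygon in a copy of $\mathbb{L}^2$; its edges include $E$ (joining $V_1$ and $V_2$), as well as $E_1$ (through $V_1$) and $E_2$ (through $V_2$). Within this polygon, I would identify $E$ as playing the role of the ``outermost edge'' relative to the projection of $e_0$, so that $E_1$ and $E_2$ become the analogues of the framing faces in the two-dimensional setting. Then I would apply Theorem \ref{nik-th} to the polygon $F$: taking $F$ itself as the relevant two-dimensional polyhedron, $E$ as its outermost codimension-one face (an edge), and $E_1, E_2$ as the two adjacent codimension-one faces at the endpoints of $E$. The theorem then yields $-(u,v) \le 14$, where $u$ and $v$ are the (in-plane) external normals to $E_1$ and $E_2$. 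By the remark following Theorem \ref{nik-th}, when $E_1$ and $E_2$ are divergent this quantity equals $\cosh \rho(E_1, E_2)$, giving the claim.

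The point requiring care is the verification that $E$ really is the outermost edge of the polygon $F$ from the appropriate interior reference point, so that Theorem \ref{nik-th} applies in the two-dimensional slice. I would argue that if $e_0$ is interior to $M$, then its orthogonal projection (or the natural induced reference point) onto the plane of $F$ lies in $F$, and that $E$'s being outermost in the three-dimensional polyhedron forces it to be outermost among the edges of $F$ as well; this uses that distance to a face is controlled by distance within the containing plane. The subtlety is that ``outermost'' is a statement about the hyperbolic distance from $e_0$ to each face, and I must confirm this ordering is inherited by the induced two-dimensional configuration rather than merely assumed.

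The main obstacle I expect is precisely this reduction step: justifying rigorously that Nikulin's theorem may be invoked on the polygon $F$ with $E$ in the role of outermost edge, and that the normals $u, v$ appearing in the theorem coincide (up to the relevant projection) with normals governing $\rho(E_1, E_2)$. Once the two-dimensional configuration is correctly matched to the hypotheses of Theorem \ref{nik-th}, the bound $\cosh\rho(E_1, E_2) \le 14$ follows immediately, and the divergent-case hypothesis is exactly what converts $-(u,v)$ into a hyperbolic distance via the stated identity $-(u,v) = \cosh\rho(E_1, E_2)$.
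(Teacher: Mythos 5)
Your proposal matches the paper's proof: the paper likewise projects $e_0$ orthogonally onto the plane of $F$, notes that the projection is interior to $F$ (by convexity and acute-angledness), uses the three perpendiculars theorem to transfer the ``outermost'' property of $E$ from the ambient polyhedron to the polygon $F$, and then applies Theorem \ref{nik-th} in dimension two with $E_1$, $E_2$ as the framing faces. The reduction step you flag as needing care is exactly the one the paper handles via the three perpendiculars theorem, so your plan is correct and essentially identical.
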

\begin{proof}
Let $e_1$ be the projection of $e_0$ to the face $F$. Note that $e_1$
is an interior point of this face, since otherwise  the point
$e_0$ would lie outside of some dihedral angle adjacent to  $F$
(because the polyhedron $M$ is convex and acute-angled).
Further, since $E$ is
the outermost edge of the polyhedron for $e_0$, then
$$\rho (e_0, E) \ge \rho (e_0, E_i), \ i = 1, 2.$$
It follows from this and  the three
perpendiculars theorem  that the distance between the point $e_0$
and the edge $E$ is not
less than the distance between this point and all other edges of the face $F$.
This means that Theorem \ref{nik-th} is applicable to the point $e_1$
inside the polygon $F$.
\end{proof}

\begin{proposition}\label{sledstvie-rasst}
Let $F_u$ and $F_v$ be the faces of the polyhedron $M$ framing the
outermost edge $E$, passing through different vertices and containing the edges  $E_1$
 and $E_2$, respectively, mentioned in
Proposition \ref{ud-rebro}. Then $\cosh \rho (F_u, F_v) \le 14$ if  $F_u$
and $F_v$ diverge.
\end{proposition}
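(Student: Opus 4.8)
The plan is to reduce this three-dimensional statement to the two-dimensional bound already established in Proposition \ref{ud-rebro}, using nothing more than the monotonicity of the distance function under inclusion of sets. First I would record the geometric picture: the edges $E_1$ and $E_2$ are exactly the common edges $F \cap F_u$ and $F \cap F_v$, so each of them lies in the supporting plane of the corresponding framing face. Writing $\ell_1$ and $\ell_2$ for the geodesic lines carrying $E_1$ and $E_2$, this means $\ell_1$ lies in the plane of $F_u$ and $\ell_2$ lies in the plane of $F_v$. Proposition \ref{ud-rebro} already provides $\cosh\rho(E_1, E_2) \le 14$, where $\rho(E_1, E_2) = \rho(\ell_1, \ell_2)$ is computed inside the plane spanned by $F$; since that plane is totally geodesic in $\mathbb{L}^3$, the common perpendicular realizing this distance also realizes it in the ambient space, so the value is the same whether measured in the plane of $F$ or in $\mathbb{L}^3$.

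The main step is then the comparison $\rho(F_u, F_v) \le \rho(E_1, E_2)$. Recall that for divergent faces the quantity $\rho(F_u, F_v)$ is the length of the common perpendicular of their supporting planes, and that in general the distance between two subsets $A, B$ of $\mathbb{L}^3$ is the infimum $\inf\{\rho(a, b) : a \in A,\ b \in B\}$. Because $\ell_1$ lies in $F_u$ and $\ell_2$ lies in $F_v$, enlarging the pair of sets over which we take the infimum can only decrease it:
\[
\rho(F_u, F_v) = \inf_{x \in F_u,\, y \in F_v} \rho(x, y) \le \inf_{x \in \ell_1,\, y \in \ell_2} \rho(x, y) = \rho(E_1, E_2).
\]
As $\cosh$ is increasing on $[0, \infty)$, applying it to this inequality and invoking Proposition \ref{ud-rebro} gives $\cosh\rho(F_u, F_v) \le \cosh\rho(E_1, E_2) \le 14$, which is the assertion.

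I do not expect a genuine obstacle here, as the argument is essentially formal once the configuration is set up correctly. The one point demanding care is the bookkeeping of which distance is meant: that $\rho(F_u, F_v)$ denotes the distance between the supporting planes (consistent with the identity $-(u,v) = \cosh\rho(F_u, F_v)$ for divergent faces used throughout the paper), and that the lines $\ell_1, \ell_2$ genuinely lie in those planes, so the inclusion of sets is legitimate. The divergence hypothesis guarantees $\rho(F_u, F_v) > 0$, so no degenerate configuration occurs; and should the edges $E_1, E_2$ happen to meet or be parallel, the right-hand side would only be smaller, so the bound persists \emph{a fortiori}.
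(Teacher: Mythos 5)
Your proposal is correct and is essentially the paper's own argument: the paper likewise observes that the distance between the faces is at most the distance between the edges they contain (set inclusion makes the infimum over the larger sets no greater) and then applies Proposition \ref{ud-rebro} together with the monotonicity of $\cosh$. You simply spell out in more detail what the paper dispatches in one line.
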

\begin{proof}
Clearly, the distance between the faces is not greater than the distance
between their edges.
Therefore,
$$
\cosh \rho (F_u, F_v) \le \cosh \rho (E_1, E_2) \le 14,
$$
as announced.
\end{proof}

Note that we are given bounds
on all elements of the matrix $G(e_1, e_2, e_3, e_4)$,
because all the faces $F_i$ are pairwise intersecting, excepting,
possibly, the pair of faces $F_3$ and $F_4$, but if they do not intersect,
then the distance between
these faces is bounded due to Proposition \ref{sledstvie-rasst}.
Thus, there are only finitely many possible matrices
$G(e_1, e_2, e_3, e_4)$.

The vectors $e_1, e_2, e_3, e_4$ generate some sublattice $L '$
of finite index in the lattice $L$. More precisely, the lattice $ L $ lies between
the lattices $L'$ and $(L')^*$, and
$$[(L ')^* \colon L']^2 = |d (L ')|,$$
where $d (L)$ denotes the discriminant of the lattice $L$.
Hence we have the inequality
$$[L \colon L ']^2 \le |d (L')|.$$
By using this estimate,  in each case generated by the lattice $L '$,
we find  all its extensions of finite index.

To reduce this sorting we use the fact that
$$
\det \ G (e_0, e_1, e_2, e_3, e_4) = 0.
$$

Note that $\det \ G(e_0, e_1, e_2, e_3, e_4)$ is a quadratic function
with respect to the element $(e_3, e_4)$. By using this observation and a number of
geometric considerations presented in Section \ref {osn-utv},
we shall obtain sharper bounds on the distance between the faces $F_3$ and $F_4$.
Thus, we shall obtain a finite list of maximal anisotropic
lattices that can be reflective.

Note also that if the lattice $L '$ is $1.2$-reflective, then the lattice
 $L$ containing
$L'$ is also $1.2$-reflective (passing to a superlattice, we add only
$1$- and $2$-reflections, which cut the fundamental polyhedron,
so that its volume remains finite).

So our goal is the implementation of
the following steps (for each step, we refer to the section below where it is made):

1) finding sharper bounds on the element $(e_3, e_4)$ of the matrix
$G (e_1, e_2, e_3, e_4)$ (Section~\ref{osn-utv});

2) finding all Gram matrices $G (e_1, e_2, e_3, e_4) $ and detecting
the type of the lattice $L'$ according to each matrix; next, picking  only anisotropic
lattices and finding all possible extensions (Section~\ref{sp-prom});

3) testing all maximal lattices on $1.2$-reflectivity
(Section~\ref{issled-refl}).

\section{Quadratic lattices}\label{quadratic-forms}

In this section we give some necessary information about
indefinite quadratic lattices. For more details, see \cite{kassels82qf} and
\cite{vinberg84abskr}.

Let $A$ be a principal ideal ring. A \textit{quadratic $ A $-module} is a
free $A$-module of finite rank equipped with a non-degenerate
 symmetric bilinear form
with values in $A$, called a \textit{scalar product}. In particular,
a quadratic $\mathbb{Z}$-module is called
a \textit{quadratic} lattice.
We denote by $[C]$ the standard module $A^n$ whose
scalar multiplication is defined by a Gram matrix $C$.

The determinant of the Gram matrix of a basis of a module $L$  is called
a \textit{discriminant} $d (L)$ of the quadratic $A$-module $L$.
It is defined up to a multiplication by an element of $(A^*)^2 $ ($ A ^ * $
denotes the group of invertible elements of the ring $A$) and can be regarded as
an element of the semigroup $ A /(A^*)^2$. Hence
$$
d ([C]) = \det C \cdot (A^*)^2.
$$
A quadratic $A$-module $L$ is called \textit{unimodular} if $d(L) \in A^*$.
In the case where $2 \not \in A^*$, the quadratic $A$-module $L$ is called
\textit{even} if $(x, x) \in 2A$ for any $x \in L$, and \textit{odd}
otherwise.

A nonzero vector $x \in L $ is called \textit{isotropic} if $(x, x) = 0$.
A quadratic
module $L$ is called \textit{isotropic} if it contains at least one
isotropic vector,
otherwise $L$ is called \textit{anisotropic}.

Since $(\mathbb{Z}^*)^2 = {1}$, the discriminant $d (L)$ of a
quadratic lattice $L$
is  an integer number. The unimodularity of a quadratic lattice $L$ is
equivalent to that $L$ coincides with its \textit {conjugate} lattice
$$
L^* = \{x \in L \otimes \mathbb{Q} \colon \forall y \in L \ \ (x, y)
\in \mathbb {Z} \}.
$$

For a lattice $L$, the invariant factors of the Gram matrix of a basis of $L$
are called \textit {invariant factors} of the lattice $L$.
The invariant factors of an integer matrix $G$ are defined through its
minors,
namely, if $D_k$ is
the greatest common divisor of all minors of order $k$, then
$E_k = \frac{D_k}{D_{k-1}}$ is an
invariant factor of the matrix $G$.
 It is known that
$ E_k \mid E_{k + 1} $ and the product of all invariant factors of the
lattice $L$ is equal to $|d(L)|$.

Every quadratic lattice $L$ defines a quadratic real vector
space
$L_{\infty} = L \otimes \mathbb{R}$ and, for any prime~$p$, it defines
a quadratic $\mathbb{O}_p$-module
$L_p = L \otimes \mathbb{O}_p$, where $\mathbb {O}_p$ is the ring
of $p$-adic numbers.
The signature of
 the lattice $ L $ is defined as the signature of the space $ L_{\infty}$.
 It is obvious that if two quadratic
lattices $L$ and $M$ are isomorphic, then
they have the same signature and $ L_p \simeq M_p $ for any prime $p$.
The converse is also true under the following conditions:

(i) $L$ is indefinite;

(ii) for any prime $ p $, the lattice $L$ has two invariant factors
divisible by  the same power of $p$.

The structure of quadratic $\mathbb{O}_p$-modules
can be described as follows.
Each such module
$L_p$ admits the \textit{Jordan decomposition}
$$
L_p = L_p^{(0)} \oplus [p] L_p^{(1)} \oplus [p^2] L_p^{(2)} \oplus \ldots,
$$
where all $L_p^{(j)}$ are unimodular quadratic $\mathbb{O}_p$-modules.
These unimodular modules are determined by $L$ uniquely up to an isomorphism, unless
$p \not = 2$. In case $p = 2$ the rank and the parity of each such module are uniquely
determined by $L$.

\begin{proposition}
If $L$ is a maximal quadratic lattice that is not contained
in any other quadratic lattice, then
$$
L_p = L_p^{(0)} \oplus [p] L_p^{(1)}
$$
for all primes $ p \ | \ d(L) $.
\end{proposition}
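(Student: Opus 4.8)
The plan is to reduce maximality to a purely local, $p$-adic condition, and then, at any prime whose Jordan decomposition contains a summand $[p^j]L_p^{(j)}$ with $j \ge 2$, to exhibit an explicit integral superlattice, contradicting maximality.

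First I would set up the dictionary between superlattices and the discriminant group. Since the scalar product is $\mathbb{Z}$-valued we have $L \subseteq L^*$, and every integral lattice $M$ with $L \subseteq M$ and $M \otimes \mathbb{Q} = L \otimes \mathbb{Q}$ satisfies $L \subseteq M \subseteq L^*$. The bilinear form induces a form $b \colon L^*/L \times L^*/L \to \mathbb{Q}/\mathbb{Z}$ by $b(\bar x, \bar y) = (x,y) + \mathbb{Z}$, and $M$ is integral precisely when $b$ vanishes on $M/L$. A cyclic subgroup $\langle \bar x \rangle$ is totally $b$-isotropic iff $b(\bar x, \bar x) = 0$, because $b(m\bar x, n\bar x) = mn\, b(\bar x,\bar x)$. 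Hence $L$ is maximal iff $L^*/L$ contains no nonzero $\bar x$ with $b(\bar x, \bar x) = 0$. Using the primary decomposition $L^*/L = \bigoplus_p L_p^*/L_p$, in which distinct primary components are mutually $b$-orthogonal, this condition splits over $p$: the lattice $L$ is maximal iff each $L_p$ is maximal, i.e. iff no $L_p^*/L_p$ carries a nonzero self-isotropic vector.

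Next I would argue locally at a prime $p \mid d(L)$ by contradiction, assuming a Jordan summand $[p^j]L_p^{(j)}$ with $j \ge 2$ and $L_p^{(j)} \neq 0$. Choose a primitive $v \in L_p^{(j)}$ and let $e$ be the same vector viewed inside $[p^j]L_p^{(j)} \subseteq L_p$, so that $(e,e) = p^j (v,v)$. As the Jordan summands are pairwise orthogonal, for every $x \in L_p$ only the $[p^j]L_p^{(j)}$-component of $x$ pairs nontrivially with $e$, giving $(e,x) \in p^j \mathbb{O}_p$; since $j \ge 1$ this yields $\tfrac1p e \in L_p^*$, and since $j \ge 2$ we get $(\tfrac1p e, \tfrac1p e) = p^{\,j-2}(v,v) \in \mathbb{O}_p$. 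Thus $\bar e = \tfrac1p e + L_p$ is a nonzero self-isotropic vector of the discriminant form (nonzero because $v$ is primitive, so $e \notin p L_p$), and $L_p + \mathbb{O}_p \tfrac1p e$ is a proper integral superlattice of $L_p$. This contradicts local maximality, so no summand with $j \ge 2$ can occur, which is exactly the asserted decomposition $L_p = L_p^{(0)} \oplus [p]L_p^{(1)}$.

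The computation is uniform in $p$: it uses only that $L_p^{(j)}$ is a nonzero $\mathbb{O}_p$-module, hence contains a primitive vector, together with $(v,v) \in \mathbb{O}_p$, so no separate treatment of even unimodular summands is needed at $p = 2$. The step deserving the most care, and the one I would write out in full, is the localization in the second paragraph: the precise correspondence between integral superlattices and totally $b$-isotropic subgroups of $L^*/L$, and its compatibility with the primary decomposition $L^*/L = \bigoplus_p L_p^*/L_p$ and with the orthogonal Jordan decomposition. This is what legitimizes passing from the global maximality hypothesis to a local contradiction; once it is in place, the explicit enlargement above is routine.
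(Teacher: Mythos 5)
Your argument is correct, and its engine is the same as the paper's: a Jordan constituent $[p^j]L_p^{(j)}$ with $j\ge 2$ can be divided by $p$ without destroying integrality, which contradicts maximality. The genuine difference lies in how the local enlargement is turned into a global superlattice. The paper rescales all the offending Jordan blocks at once, forming $L'_p=L_p^{(0)}\oplus[p]L_p^{(1)}\oplus L_p^{(2)}\oplus[p]L_p^{(3)}\oplus\cdots\supsetneq L_p$, and then cites the theorem on the existence of a lattice with prescribed local completions (Cassels, Theorem~1.1, p.~218) to obtain a global $L'\supsetneq L$. You instead adjoin the single vector $\tfrac{1}{p}e$ and globalize through the correspondence between integral overlattices $L\subseteq M\subseteq L^*$ and isotropic elements of the discriminant form on $L^*/L$, together with its primary decomposition; the point that makes this work is that an $x\in L^*$ whose class is $p$-primary has $(x,x)\in\mathbb{Z}[1/p]$, so integrality in $\mathbb{O}_p$ already forces $(x,x)\in\mathbb{Z}$. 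Your route avoids the appeal to the local--global existence theorem and is in that sense more elementary and self-contained, at the price of having to set up the discriminant-form formalism (the identification of the $p$-primary part of $L^*/L$ with $L_p^*/L_p$ compatibly with the forms), which you rightly single out as the step requiring full detail. The paper's version is shorter given the reference, and its block rescaling exhibits in one stroke an overlattice already of the desired shape $L_p^{(0)}\oplus[p]L_p^{(1)}$ at $p$; for the proposition itself, either enlargement suffices.
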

\begin{proof}
It is clear that if a lattice is maximal, then its invariant factors are
free from
the squares. Indeed, otherwise we can consider the lattice
$$
L'_p = L_p^{(0)} \oplus [p] L_p^{(1)} \oplus L_p^{(2)}
\oplus [p] L_p^{(3)} \oplus
\ldots.
$$
Then we have the following chain of embeddings:
$$
L_p \subset L'_p \subset L_p \otimes \mathbb{Q}_p,
$$
which hold due to the fact that the lattice $L'_p $ is derived from
the lattice $L_p$
by reducing some of the vectors by $p$. To complete the proof
it remains to apply the theorem on the existence of given quadratic
completions (see, e.g., Theorem 1.1 on p. 218 in \cite{kassels82qf}).
\end{proof}

\begin{definition}
Let $a, b \in \mathbb{Q}^*_ p $. Set

$(a, b)_p := 1$ if the equation $ax^2 + by^2 = 1$ has a solution in
$\mathbb{Q}^*_p$

and $(a, b)_p := -1 $ otherwise.

The number $(a, b)_p $ is called the Hilbert symbol.
\end{definition}

It is known that the group $ \mathbb{Q}^*_ p / (\mathbb{Q}^*_ p)^2$ can be
regarded as a vector space over $\mathbb{Z}_2 = \mathbb{Z} / 2 \mathbb{Z}$
of rank $2$  for $p \not = 2$
(respectively, of rank $3$ for $p = 2$). A
basis of this vector space is either
the set $ \{\varepsilon, p \}$ with $ p \not = 2 $, where $p$ is prime and $\varepsilon$ is a
quadratic non-residue modulo $p$, or the set $\{- 1, -3, p \}$ for $p = 2$.
The Hilbert symbol is a non-degenerate symmetric bilinear
form on this vector space. Its values on the basis elements
are well known.

Now we can define the Hasse invariant for an arbitrary quadratic
space $W$ over the field $\mathbb{Q}_p $. Let $f (x) $ be the quadratic form corresponding
to $W$ and let $a_1, \ldots, a_ {n}$ be its coefficients in the
canonical form.

\begin{definition}
The number
$$
\varepsilon_p (f) = \prod_{i <j} (a_i, a_j)_p
$$
is called the Hasse invariant of the quadratic form $f$.
\end{definition}

The following assertions are well-known
(see, e.g., \cite[Lemma 2.6, p.~76]{kassels82qf}).

\begin{theorem}\label{anisotr-lemma}
A quadratic $\mathbb{O}_p$-module $L$ of rank $4$
is anisotropic if and only if the following conditions hold:

(1) $ d(f) \in (\mathbb{Q}^*_ p)^2$;

(2) $\varepsilon_p (f) = - (- 1, -1)_p$.
\end{theorem}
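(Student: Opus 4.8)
The conditions in the statement concern the quadratic form $f$ over $\mathbb{Q}_p$ attached to $L$, so I would first reduce the module question to a question about $f$: a nonzero isotropic vector of $f$ over $\mathbb{Q}_p$ can be rescaled by a power of $p$ so as to become a primitive vector of $L$, whence $L$ is anisotropic over $\mathbb{O}_p$ if and only if $f$ is anisotropic over $\mathbb{Q}_p$. Thus it suffices to decide the isotropy of a nondegenerate quaternary form over $\mathbb{Q}_p$ in terms of its discriminant $d(f)$ and Hasse invariant $\varepsilon_p(f)$, using the structure of $\mathbb{Q}_p^*/(\mathbb{Q}_p^*)^2$ and the Hilbert symbol already set up above. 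The engine of the argument is the elementary fact that a nondegenerate form $\varphi$ represents $c \in \mathbb{Q}_p^*$ if and only if $\varphi \perp \langle -c \rangle$ is isotropic, together with the remark that $f$ is isotropic exactly when it splits off a hyperbolic plane.

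Diagonalizing $f = \langle a_1, a_2, a_3, a_4 \rangle$ and writing $f = g \perp \langle a_4 \rangle$ with $g = \langle a_1, a_2, a_3 \rangle$, one has that $f$ is isotropic if and only if the ternary form $g$ represents $-a_4$ (the subcase where $g$ is already isotropic being absorbed, since an isotropic form represents everything). So the quaternary problem reduces to the representation theory of ternary forms, which I would in turn settle from the binary case. For binary forms, $\langle a, b \rangle$ is isotropic if and only if $-ab \in (\mathbb{Q}_p^*)^2$, i.e. $d = -1$ in the square-class group; this is immediate from $a x^2 + b y^2 = 0$. Feeding this into the reduction gives the ternary isotropy criterion $\varepsilon_p(g) = (-1, -d(g))_p$, which I would obtain by asking when $\langle a_1, a_2 \rangle$ represents $-a_3$ and simplifying the resulting Hilbert symbols by bimultiplicativity and the identities $(x, -x)_p = 1$ and $(x, x)_p = (x, -1)_p$.

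The same computation shows that an \emph{anisotropic} ternary form $g$ represents every square class except the single class of $-d(g)$; equivalently, $g$ represents $-a_4$ precisely when $d(f) = d(g)\,a_4$ is not a square. Assembling the rank-$4$ case: if $g$ is isotropic then $f$ is isotropic, so an anisotropic $f$ forces $g$ anisotropic, and then the previous line gives that $f$ is anisotropic exactly when $d(f) \in (\mathbb{Q}_p^*)^2$, which is condition (1). It remains to pin down the Hasse invariant in this situation by tracking $\varepsilon_p$ through the splitting $f = g \perp \langle a_4 \rangle$: using $\prod_i (a_i, d(g))_p = (d(g), d(g))_p = (d(g), -1)_p$ one finds that $d(f)$ square together with $g$ anisotropic forces $\varepsilon_p(f) = -(-1, -1)_p$, and conversely, giving condition (2).

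The conceptual anchor, and a useful check on the bookkeeping, is that over $\mathbb{Q}_p$ there is a unique anisotropic quaternary form, namely the reduced-norm form $N = \langle 1, -a, -b, ab \rangle$ of the quaternion division algebra $(a, b)_p$ with $(a, b)_p = -1$; a direct Hilbert-symbol computation gives $d(N) = a^2 b^2 \in (\mathbb{Q}_p^*)^2$ and $\varepsilon_p(N) = (-1, -1)_p\,(a, b)_p = -(-1, -1)_p$, which is exactly the target profile. The main obstacle I anticipate is precisely this Hilbert-symbol bookkeeping, and it is most delicate at $p = 2$, where $\mathbb{Q}_2^*/(\mathbb{Q}_2^*)^2$ has rank $3$ rather than $2$ and where $(-1, -1)_2 = -1$; handling that case carefully, rather than by the uniform odd-$p$ computation, is the part that will demand the most attention.
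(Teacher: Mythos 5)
Your argument is correct, but there is nothing in the paper to compare it against: the paper does not prove this theorem at all, stating it as well known and citing Cassels (Lemma~2.6, p.~76). What you have written is essentially the standard textbook proof from that source (and from Serre's \emph{Course in Arithmetic}): reduce module isotropy to form isotropy by rescaling, climb the ranks via ``$\varphi$ represents $c$ iff $\varphi\perp\langle -c\rangle$ is isotropic,'' use the fact that an anisotropic ternary form omits exactly the square class of $-d(g)$, and then track the Hasse invariant through $f=g\perp\langle a_4\rangle$ using $\prod_i(a_i,a_4)_p=(d(g),a_4)_p$ and $(x,x)_p=(x,-1)_p$; the quaternion-norm-form check at the end is a sound sanity test. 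One small reassurance: your anticipated difficulty at $p=2$ does not materialize, because the only properties of the Hilbert symbol your computation uses (bimultiplicativity, $(x,-x)_p=1$, and $(x,x)_p=(x,-1)_p$) hold uniformly for all primes, including $p=2$; the rank of $\mathbb{Q}_2^*/(\mathbb{Q}_2^*)^2$ and the explicit value of $(-1,-1)_2$ never enter the argument.
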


\begin{theorem}
A quadratic lattice $L$ of rank $4$
is anisotropic if and only if it is
anisotropic over all fields $\mathbb{Q}_p $ including $p = \infty$.
\end{theorem}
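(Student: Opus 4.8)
The assertion is the Hasse--Minkowski local--global principle applied to the rational quadratic space underlying $L$, so the plan is first to pass from the lattice to this space and then to invoke that principle. The easy half is formal; the real content is a single classical theorem that I would cite rather than reprove.

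First I would reduce everything to a statement over $\mathbb{Q}$. The lattice $L$ sits inside $L\otimes\mathbb{Q}$, and conversely any nonzero isotropic vector of $L\otimes\mathbb{Q}$ becomes an isotropic vector of $L$ after clearing denominators; hence $L$ is anisotropic if and only if the quadratic space $L\otimes\mathbb{Q}$ is anisotropic. The same clearing-denominators argument over the ring $\mathbb{O}_p$ shows that $L_p=L\otimes\mathbb{O}_p$ is anisotropic if and only if $L\otimes\mathbb{Q}_p$ is, while $L\otimes\mathbb{R}$ handles the place $p=\infty$. Thus it suffices to prove the corresponding statement for a nondegenerate rational quadratic form $f$ of rank $4$: namely, that $f$ is anisotropic over $\mathbb{Q}$ if and only if it is anisotropic over every completion $\mathbb{Q}_v$, where $v$ runs over all primes together with $v=\infty$.

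The easy direction I would dispatch at once. Since $\mathbb{Q}\subset\mathbb{Q}_v$ for each $v$, a nontrivial rational zero of $f$ is also a nontrivial $\mathbb{Q}_v$-zero; taking contrapositives, anisotropy of $f$ over a single completion already forces anisotropy over $\mathbb{Q}$, and in particular anisotropy over all completions does. For the converse I would appeal to the Hasse--Minkowski theorem over $\mathbb{Q}$ (see, e.g., \cite{kassels82qf}): if $f$ represents zero over every $\mathbb{Q}_v$, then it represents zero over $\mathbb{Q}$. In rank $4$ one establishes this by diagonalising $f=a_1x_1^2+a_2x_2^2-a_3x_3^2-a_4x_4^2$ and comparing the rational numbers represented by the two binary forms $a_1x_1^2+a_2x_2^2$ and $a_3x_3^2+a_4x_4^2$, patching the local solutions into a global one by means of Dirichlet's theorem on primes in arithmetic progressions together with the Hilbert reciprocity law $\prod_v (a,b)_v=1$ for the symbols introduced above.

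The only genuine content lies in this last step, and that is exactly where I would expect the main difficulty: the reduction steps are purely formal, whereas the converse encodes the full local--global principle. I would therefore cite Hasse--Minkowski as a black box. I would also note that the sharp form of the nontrivial direction is ``$f$ is anisotropic over $\mathbb{Q}$ if and only if it is anisotropic over \emph{at least one} completion $\mathbb{Q}_v$''; it is this refinement, combined with the explicit local test of Theorem~\ref{anisotr-lemma}, that is actually used in the classification, since to certify that a candidate lattice is anisotropic it then suffices to exhibit a single place $v$ at which the local conditions (1)--(2) of that theorem are satisfied.
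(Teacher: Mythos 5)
Your proposal is correct and takes essentially the same route as the paper, which treats this as the classical Hasse--Minkowski local--global principle and simply cites Cassels without giving a proof. Your closing observation about the quantifier is worth keeping: the true nontrivial direction is that anisotropy over $\mathbb{Q}$ is equivalent to anisotropy over \emph{at least one} completion, and the paper's literal wording (``anisotropic over all fields $\mathbb{Q}_p$ including $p=\infty$'') cannot be meant as written, since a hyperbolic lattice is always isotropic over $\mathbb{R}$.
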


\section{Auxiliary results}\label{osn-utv}

In this section we consider the method of the outermost edge in details. Let
$M$ be an acute-angled convex polyhedron in the three-dimensional
Lobachevsky space
$\mathbb{L}^3$, let $e_0$ be a fixed interior point of this
polyhedron, and let $E$ be the outermost edge from this point.
In this case $e_1$, $e_2$ are normals to the faces $F_1$ and $F_2$ containing the
 edge $E$, and $e_3$, $e_4$ are normals to the faces  $F_3$ and~$F_4$
passing through the vertices $V_1$ and $V_2$ of the edge $E$, respectively.

Let us consider the extended
Gram matrix
$$
G(e_0, e_1, e_2, e_3, e_4) =
\begin{pmatrix}
-1 & -x_1 & -x_2 & -x_3 & -x_4\\
-x_1 & d_1 & -\varepsilon_{12} & -\varepsilon_{13} & -\varepsilon_{14}\\
-x_2 & -\varepsilon_{12} & d_2 & -\varepsilon_{23} & -\varepsilon_{24}\\
-x_3 & -\varepsilon_{13} & -\varepsilon_{23} & d_3 & -T\\
-x_4 & -\varepsilon_{14} & -\varepsilon_{24} & -T & d_4
\end{pmatrix}
$$
where $d_i = (e_i, e_i)$ can equal $1$ or $2$, which corresponds to $1$- or
$2$-reflections. Then the numbers
$\varepsilon_{ij} = (e_i, e_j)$ can equal $0$ or $1$,  in addition,
 $$T = (e_3, e_4) \le 14 \sqrt{d_3 d_4},$$
 where $T$ is integer,
 and
$$x_j = -(e_0, e_j) = - \sinh \rho (e_0, H_j) \cdot \sqrt{(e_j, e_j)} > 0$$

Before starting investigation of Gram matrices we now derive a useful
formula for the distance from a point to a plane of arbitrary codimension
in the space $\mathbb{L}^n$.

\begin{theorem}\label{formula-rasst}
The distance from the point $e_0 \in \mathbb{L}^n$, where $(e_0, e_0) = -1$,
to the plane
$$H_{e_1, \ldots, e_k} := \{x \in \mathbb{L}^n \colon x \in \langle e_1,
\ldots, e_k \rangle^{\perp}\}$$
can be calculated by the formula
$$
\sinh^2 \rho(e_0,  H_{e_1, \ldots, e_k}) =\sum_{i, j}\overline{g_{ij}} y_i y_j,
$$
where $\overline{g_{ij}}$ are the elements of the inverse matrix
$G^{-1} = G(e_1, \ldots, e_k)^{-1}$, and $y_j = - (e_0, e_j)$ for all $1 \le j \le k$.
\end{theorem}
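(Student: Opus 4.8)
The plan is to derive the distance formula by reducing the problem to computing an orthogonal projection in the Minkowski space $V = \mathbb{E}^{n,1}$, then expressing the result in coordinates relative to the (positive-definite) subspace spanned by $e_1, \ldots, e_k$. First I would observe that the plane $H_{e_1, \ldots, e_k}$ is the intersection of the hyperboloid with the subspace $W^{\perp}$, where $W = \langle e_1, \ldots, e_k \rangle$. Since the $e_i$ are normals to faces of an acute-angled polyhedron, $W$ is a positive-definite (Euclidean) subspace, so its Gram matrix $G = G(e_1, \ldots, e_k)$ is invertible; this is what makes $G^{-1}$ and the entries $\overline{g_{ij}}$ meaningful.

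The key computation is to decompose $e_0 = w + w^{\perp}$ with $w \in W$ and $w^{\perp} \in W^{\perp}$. The distance from $e_0$ to the plane $H_{e_1, \ldots, e_k}$ is realized along the geodesic through $e_0$ orthogonal to that plane, and the standard fact in the hyperboloid model is that
\begin{equation*}
\sinh^2 \rho(e_0, H_{e_1, \ldots, e_k}) = (w, w),
\end{equation*}
where $w$ is the component of $e_0$ lying in the Euclidean subspace $W$. I would justify this by noting that $(e_0, e_0) = -1$ splits as $(w, w) + (w^{\perp}, w^{\perp}) = -1$, so $(w^{\perp}, w^{\perp}) = -1 - (w, w)$; normalizing $w^{\perp}$ to a point $p$ on the hyperboloid (the foot of the perpendicular) and using $\cosh^2 - \sinh^2 = 1$ gives $\cosh^2 \rho = -(e_0, p)^2 / \ldots$, and the algebra collapses to $\sinh^2 \rho = (w, w)$.

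It then remains to express $(w, w)$ in terms of the data $y_j = -(e_0, e_j)$. Writing $w = \sum_i c_i e_i$, the defining relations $(e_0, e_j) = (w, e_j) = \sum_i c_i (e_i, e_j) = \sum_i c_i g_{ij}$ say that the coefficient vector $c$ solves $G c = (-y_1, \ldots, -y_k)^{\top}$, hence $c = -G^{-1} y$. Substituting into $(w, w) = \sum_{i,j} c_i c_j g_{ij} = c^{\top} G c$ and using $G^{-1} G G^{-1} = G^{-1}$ yields
\begin{equation*}
(w, w) = y^{\top} G^{-1} y = \sum_{i,j} \overline{g_{ij}}\, y_i y_j,
\end{equation*}
which is exactly the claimed formula.

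**The main obstacle** I expect is establishing the geometric identity $\sinh^2 \rho(e_0, H_{e_1, \ldots, e_k}) = (w, w)$ cleanly for a plane of \emph{arbitrary} codimension $k$, rather than for a single hyperplane. For codimension one this is the familiar relation $x_j = -\sinh\rho \cdot \sqrt{(e_j, e_j)}$ already recorded in the excerpt, but for general $k$ one must argue that the nearest point of the plane is genuinely the normalized projection $w^{\perp}$ and that the minimizing geodesic lies in the two-dimensional plane $\langle e_0, w \rangle$. I would handle this by verifying that $w^{\perp}$, suitably normalized, lies on the hyperboloid inside $H_{e_1, \ldots, e_k}$ and that the segment from $e_0$ to it meets the plane orthogonally, so that the three-perpendiculars reasoning reduces the general case to a single geodesic; the remaining steps are then the routine linear algebra carried out above.
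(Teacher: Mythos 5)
Your proposal is correct and takes essentially the same route as the paper: the paper writes the $W^{\perp}$-component as $f' = e_0 + \sum_i \lambda_i e_i$, solves $G\overline{\lambda} = \overline{y}$, and gets $\sinh^2\rho = -(f',f') - 1 = \overline{y}^{T}G^{-1}\overline{y}$, which is exactly your decomposition $e_0 = w + w^{\perp}$ with $c = -G^{-1}y$ viewed from the complementary component. The geometric step you flag as the main obstacle (that the normalized $W^{\perp}$-component is the foot of the perpendicular and realizes the distance) is handled in the paper at the same level of detail you propose, so there is no gap.
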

\begin{proof}
Let $f$ be the orthogonal projection $e_0$ to the plane
$H_{e_1, \ldots, e_k}$.
It is the intersection of the
plane $H_{e_1, \ldots, e_k}$ with the straight
line $\ell$ passing through the point $e_0$ and perpendicular to $H_{e_1, \ldots, e_k}$.
Since $\ell$ and $H_{e_1, \ldots, e_k}$ are orthogonal,  the intersection of
their defining subspaces
$\langle \ell \rangle$ and $\langle H_{e_1, \ldots, e_k} \rangle$
is a one-dimensional hyperbolic subspace~$\langle f' \rangle$. Hence the sections of
these subspaces by the subspace $\langle f' \rangle^{\perp}$
are orthogonal to each other.

It follows that
$$
\langle \ell \rangle = \langle f' \rangle \oplus \langle h' \rangle,
$$
where $h' \perp \langle H_{e_1, \ldots, e_k} \rangle$.

It remains to observe that the points $0, f, f '$ lie in the one-dimensional
hyperbolic subspace
$\langle f' \rangle$, hence $f = c f'$, where a positive
constant number $c$ can be found from the condition
$$(f, f) = c^2 (f', f') = -1.$$

Then the distance from the point $e_0 \in \mathbb{L}^n$, where $(e_0, e_0) = -1$,
to the plane
$H_{e_1, \ldots, e_k}$  equals the distance from this point
to its orthogonal projection, that is,
$$
\cosh \rho (e_0,  H_{e_1, \ldots, e_k}) = \cosh \rho (e_0, f) = - (e_0, f),
$$
whence we find that
$$
\cosh^2 \rho (e_0,  H_{e_1, \ldots, e_k}) = c^2 (e_0, f')^2
= -\dfrac{(e_0, f')^2}{(f', f')}
= -(f', f').
$$

Let us prove the following lemma.

\begin{lemma}
The following equation holds:
$$
\cosh \rho(e_0,  H_{e_1, \ldots, e_k}) = \sqrt{\dfrac{|\det
G(e_0, e_1, \ldots, e_k)|}{|\det G(e_1, \ldots,
e_k)|}}
$$
\end{lemma}
\begin{proof}
Indeed,
$$\det G(e_0, e_1, \ldots, e_k) = \det G(f', e_1, \ldots, e_k)
= (f', f') \det G(e_1, \ldots, e_k).
$$
Taking into account that $(f', f') < 0$, we have
$$
\dfrac{|\det G(e_0, e_1, \ldots, e_k)|}{|\det G(e_1, \ldots, e_k)|}
= -(f', f') =
\cosh^2 \rho (e_0, H_{e_1, \ldots, e_k}),
$$
which completes the proof of the lemma.
\end{proof}

It remains to show that
$$
-(f', f') = 1 + \overline{y}^T G^{-1} \overline{y},
$$
where
$G = G(e_1, \ldots, e_k)$, $\overline{y} = (y_1, \ldots, y_k)^T \in
\mathbb{R}^k$.
We observe that
$$
f' = e_0 + \lambda_1 e_1 + \ldots + \lambda_k e_k,
$$
and $(f', e_j) = 0$ for all $1 \le j \le k$. From these orthogonality conditions
we obtain that the column
$\overline{\lambda} = (\lambda_1, \ldots, \lambda_k)^T$ is
a solution to the system of linear equations  with the matrix $G$:
$$G \overline{\lambda} = \overline{y}.$$
Then $\overline{\lambda} = G^{-1}  \overline{y}$.
Therefore,
$$
(f', f') = (e_0, e_0) - 2(\overline{\lambda},  \overline{y})
+ \overline{\lambda}^T
G^{-1} \overline{\lambda} = -1 -  \overline{y}^T G^{-1}  \overline{y},
$$
whence we have that
$$
\sinh^2  \rho (e_0,  H_{e_1, \ldots, e_k}) = \cosh^2 \rho (e_0,
H_{e_1, \ldots, e_k}) - 1 =
-(f', f') - 1 = \overline{y}^T G^{-1}  \overline{y},
$$
as required.
\end{proof}

The fact that $E$ is the outermost edge from the point $e_0$
gives us the following estimates
on the elements of the matrix $G(e_0, e_1, e_2, e_3, e_4)$.

\begin{proposition}\label{oz-rasst-zel-matr-gr}
There are the following bounds on
the elements $x_j$ of the Gram matrix $G(e_0, e_1, e_2, e_3, e_4)$:
\begin{equation}\label{ur-1}
\dfrac{1}{d_1 d_2 - \varepsilon_{12}^2} (d_2 x_1^2
+ 2 \varepsilon_{12} x_1 x_2 + d_1 x_2^2)
\ge \dfrac{1}{d_1 d_3 - \varepsilon_{13}^2}  (d_3 x_1^2 +
2 \varepsilon_{13} x_1 x_3 + d_1 x_3^2)
\end{equation}
\begin{equation}\label{ur-2}
\dfrac{1}{d_1 d_2 - \varepsilon_{12}^2}(d_2 x_1^2
+ 2 \varepsilon_{12} x_1 x_2 + d_1 x_2^2)
 \ge \dfrac{1}{d_1 d_4 - \varepsilon_{14}^2}  (d_4 x_1^2 + 2
\varepsilon_{14} x_1 x_4 + d_1 x_4^2)
\end{equation}
\begin{equation}\label{ur-3}
\dfrac{1}{d_1 d_2 - \varepsilon_{12}^2}(d_2 x_1^2
+ 2 \varepsilon_{12} x_1 x_2 + d_1 x_2^2)
\ge  \dfrac{1}{d_2 d_3 - \varepsilon_{23}^2}  (d_3 x_2^2 + 2
\varepsilon_{23} x_2 x_3 + d_2 x_3^2)
\end{equation}
\begin{equation}\label{ur-4}
\dfrac{1}{d_1 d_2 - \varepsilon_{12}^2}(d_2 x_1^2
+ 2 \varepsilon_{12} x_1 x_2 + d_1 x_2^2)
 \ge \dfrac{1}{d_2 d_4 - \varepsilon_{24}^2}  (d_4 x_2^2 + 2
\varepsilon_{24} x_2 x_4 +  d_2 x_4^2)
\end{equation}
\end{proposition}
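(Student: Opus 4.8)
The plan is to realize each of the four inequalities as an instance of the outermost-edge condition $\rho(e_0, E) \ge \rho(e_0, E')$, with $E'$ ranging over the edges of $M$ adjacent to $E$, and then to make both sides completely explicit by means of the distance formula of Theorem \ref{formula-rasst}.

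First I would fix the combinatorial picture. Since both vertices $V_1$ and $V_2$ of the edge $E$ are simple (we have restricted to anisotropic lattices, so $k = 1$), each of them lies in exactly three faces: $V_1 \in F_1 \cap F_2 \cap F_3$ and $V_2 \in F_1 \cap F_2 \cap F_4$. Hence the edges of $M$ issuing from $V_1$ other than $E$ are $F_1 \cap F_3 = H_{e_1, e_3}$ and $F_2 \cap F_3 = H_{e_2, e_3}$, while those issuing from $V_2$ other than $E$ are $F_1 \cap F_4 = H_{e_1, e_4}$ and $F_2 \cap F_4 = H_{e_2, e_4}$. These are four genuine edges of $M$, each sharing a vertex with $E$.

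Next I would compute the two sides. Applying Theorem \ref{formula-rasst} with $k = 2$ to $E = H_{e_1, e_2}$ and inverting the Gram matrix
$$
G(e_1, e_2) = \begin{pmatrix} d_1 & -\varepsilon_{12} \\ -\varepsilon_{12} & d_2 \end{pmatrix}
$$
gives
$$
\sinh^2 \rho(e_0, E) = \frac{1}{d_1 d_2 - \varepsilon_{12}^2}\left(d_2 x_1^2 + 2\varepsilon_{12} x_1 x_2 + d_1 x_2^2\right),
$$
which is precisely the common left-hand side of (\ref{ur-1})--(\ref{ur-4}). The same computation applied to the four adjacent edges $H_{e_1, e_3}$, $H_{e_1, e_4}$, $H_{e_2, e_3}$, $H_{e_2, e_4}$ yields, in turn, the right-hand sides of (\ref{ur-1}), (\ref{ur-2}), (\ref{ur-3}), (\ref{ur-4}), respectively.

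Finally I would invoke the defining property of $E$: being the outermost edge from $e_0$, it satisfies $\rho(e_0, E) \ge \rho(e_0, E')$ for each of the four adjacent edges $E'$. Since $t \mapsto \sinh^2 t$ is increasing on $[0, \infty)$, each such distance inequality is equivalent to the corresponding inequality between the squared hyperbolic sines, and substituting the explicit expressions above produces exactly (\ref{ur-1})--(\ref{ur-4}). The only delicate step is the bookkeeping of the first paragraph — checking that the listed intersections $F_i \cap F_j$ are really edges of $M$, which rests entirely on the simplicity of both vertices of $E$; once the correct four edges are identified, the algebra in the remaining steps is immediate.
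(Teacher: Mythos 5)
Your proposal is correct and follows essentially the same route as the paper: identify the left-hand side as $\sinh^2\rho(e_0,E)$ via Theorem \ref{formula-rasst} applied to $E=H_{e_1,e_2}$, identify each right-hand side as $\sinh^2\rho(e_0,E_{ij})$ for the four edges adjacent to $E$, and conclude from the outermost-edge property. Your explicit bookkeeping of which intersections $F_i\cap F_j$ are the adjacent edges (using simplicity of $V_1$ and $V_2$) is slightly more careful than the paper's, but the argument is the same.
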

\begin{proof}
We observe that in inequalities to be proven we have
the hyperbolic sine of distance from $e_0$ to the edge $E$, and also
the hyperbolic sines of distances from $e_0$ to the other
edges passing through the vertices of $E$. 
Indeed, the straight line $\ell$ containing
the edge $E$ lies in the faces with normal vectors $e_1$
and $e_2$, i.e.,
$$
\ell  = H_{e_1, e_2}.
$$
Therefore,
$$
G(e_1, e_2)^{-1} =
\begin{pmatrix}
d_1 & -\varepsilon_{12}\\
-\varepsilon_{12} & d_2
\end{pmatrix}^{-1} = \dfrac{1}{\det G(e_1, e_2)}
\begin{pmatrix}
d_2 &  \varepsilon_{12}^2\\
\varepsilon_{12}^2 & d_1
\end{pmatrix}= \dfrac{1}{d_1 d_2 - \varepsilon_{12}^2}
\begin{pmatrix}
d_2 &  \varepsilon_{12}^2\\
\varepsilon_{12}^2 & d_1
\end{pmatrix}.
$$
and by Theorem \ref{formula-rasst} we have
$$
\sinh^2 \rho(e_0, E) = \sinh^2 \rho(e_0, \ell)= \sinh^2 \rho(e_0, H_{e_1, e_2}) =
\dfrac{1}{d_1 d_2 - \varepsilon_{12}^2}(d_2 x_1^2
+ 2 \varepsilon_{12} x_1 x_2 + d_1
x_2^2).
$$
Thus, if we denote by $E_{ij}$ the edge that is contained in
 the faces $F_i$ and $F_j$, then
inequalities (\ref{ur-1})--(\ref{ur-4}) take the following form:
$$
\sinh^2 \rho(e_0, E) \ge \sinh^2 \rho(e_0, E_{13}), \quad \sinh^2 \rho(e_0, E)
\ge \sinh^2
\rho(e_0, E_{23}),
$$
$$
\sinh^2 \rho(e_0, E) \ge \sinh^2 \rho(e_0, E_{14}), \quad \sinh^2 \rho(e_0, E) \ge
\sinh^2 \rho(e_0, E_{24}),
$$
and these inequalities are true due to the fact that $E$ is the outermost edge for the point $e_0$.
\end{proof}

As we have said before, the determinant of the extended matrix
 $G(e_0, e_1, e_2, e_3, e_4)$ vanishes, and due to this fact we can find
 sharper bounds on the number $T$.

\begin{lemma}\label{lem-nevozm}
In the above notation, let $\alpha_{ij}$ be the dihedral angle between
the faces
$F_i$ and $F_j$ in case they intersect.
If  the angle $\alpha_{12}$ is right,
then the case
$$\alpha_{13} = \alpha_{24} = \dfrac{\pi}{4}$$
is impossible (and also the similar case
$\alpha_{14} = \alpha_{23} = \dfrac{\pi}{4}$).
\end{lemma}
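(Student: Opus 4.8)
The plan is to convert the two angle hypotheses into arithmetic conditions on the entries $d_i,\varepsilon_{ij}$ of the Gram matrix and then substitute them into the outermost-edge inequalities of Proposition \ref{oz-rasst-zel-matr-gr}, where they collapse to two mutually exclusive bounds on the ratio $x_2/x_1$. First I would record what the hypotheses mean numerically. Since the cosine of the dihedral angle $\alpha_{ij}$ equals $\varepsilon_{ij}/\sqrt{d_i d_j}$, the right angle $\alpha_{12}=\pi/2$ forces $\varepsilon_{12}=0$. For $\alpha_{13}=\pi/4$ we need $\varepsilon_{13}/\sqrt{d_1 d_3}=1/\sqrt 2$, i.e. $d_1 d_3 = 2\varepsilon_{13}^2$; as $\varepsilon_{13}\in\{0,1\}$ and $d_1,d_3\in\{1,2\}$, this forces $\varepsilon_{13}=1$ and $d_1 d_3 = 2$ (so exactly one of $d_1,d_3$ equals $2$). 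In the same way $\alpha_{24}=\pi/4$ forces $\varepsilon_{24}=1$ and $d_2 d_4=2$.

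Next I would substitute these values into the two ``crossed'' inequalities (\ref{ur-1}) and (\ref{ur-4}), namely the ones coming from the framing faces $F_3$ and $F_4$ attached to the two different vertices of $E$. Because $\varepsilon_{12}=0$, the common left-hand side simplifies to $x_1^2/d_1 + x_2^2/d_2$ (this is exactly where the right angle is used). In (\ref{ur-1}) the denominator $d_1 d_3 - \varepsilon_{13}^2$ equals $1$, and since $d_3 = 2/d_1$ the right-hand side regroups as $\tfrac{1}{d_1}(x_1+d_1 x_3)^2$; the inequality becomes $\tfrac{d_1}{d_2}x_2^2 \ge (x_1 + d_1 x_3)^2$. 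The computation symmetric under $1\leftrightarrow 2,\ 3\leftrightarrow 4$ turns (\ref{ur-4}) into $\tfrac{d_2}{d_1}x_1^2 \ge (x_2 + d_2 x_4)^2$. Since $x_3,x_4>0$ we have $(x_1+d_1x_3)^2 > x_1^2$ and $(x_2+d_2x_4)^2 > x_2^2$, so the first inequality yields $x_2^2 > \tfrac{d_2}{d_1}x_1^2$ while the second yields $x_2^2 < \tfrac{d_2}{d_1}x_1^2$, which is a contradiction. The symmetric statement $\alpha_{14}=\alpha_{23}=\pi/4$ (which forces $\varepsilon_{14}=1,\ d_1 d_4=2$ and $\varepsilon_{23}=1,\ d_2 d_3=2$) is disposed of identically, using (\ref{ur-2}) and (\ref{ur-3}) in place of (\ref{ur-1}) and (\ref{ur-4}).

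The one genuinely delicate point is picking the right pair of inequalities: one must combine the constraint from $F_3$ at vertex $V_1$ with the constraint from $F_4$ at vertex $V_2$, since it is precisely this ``crossed'' pairing that produces incompatible upper and lower bounds on $x_2/x_1$; the other pairings do not close. Everything else is bookkeeping — the hypothesis $d_i d_j = 2$ is exactly what makes the relevant denominator equal to $1$ and completes the square, and $\alpha_{12}=\pi/2$ is exactly what deletes the $x_1 x_2$ cross term. It is worth noting that neither $\det G(e_0,e_1,e_2,e_3,e_4)=0$ nor the positive-definiteness of the vertex submatrices is needed here: the impossibility is purely metric, flowing from $E$ being the outermost edge together with the strict positivity $x_3,x_4>0$.
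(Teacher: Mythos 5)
Your proof is correct and follows essentially the same route as the paper: both reduce the angle hypotheses to $\varepsilon_{13}=\varepsilon_{24}=1$ with $d_1d_3=d_2d_4=2$ and then feed the ``crossed'' pair of outermost-edge inequalities into the contradiction $x_2>x_1$ and $x_1>x_2$; the paper merely splits into three explicit cases for the $d_i$ where you complete the square uniformly. You also correctly identify the relevant pair as (\ref{ur-1}) and (\ref{ur-4}) --- the paper's text cites (\ref{ur-3}), but the inequalities it actually displays are those of (\ref{ur-4}).
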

\begin{proof}
Since each number $d_j$ equals $1$ or $2$,  the indicated collection of angles can
appear precisely when $d_1\not = d_3$ and $d_2 \not = d_4$. The
following cases are possible up to renumbering of faces:

$
(i) \ d_1 = d_2 = 1,  \ d_3 = d_4 = 2, \  \varepsilon_{12} = 0,
\ \varepsilon_{13} =
\varepsilon_{24} = 1;
$

$
(ii) \ d_1 = d_4 = 1,  \ d_2 = d_3 = 2, \  \varepsilon_{12} = 0,
\ \varepsilon_{13} =
\varepsilon_{24} = 1;
$

$
(iii) \ d_1 = d_2 = 2,  \ d_3 = d_4 = 1, \  \varepsilon_{12} = 0,
\ \varepsilon_{13} =
\varepsilon_{24} = 1.
$

Then inequalities (\ref{ur-1}) and (\ref{ur-3}) from Proposition
\ref{oz-rasst-zel-matr-gr} take the following form in case~(i):
$$
x_1^2 + x_2^2 \ge 2x_1^2 + 2 x_1 x_3 + x_3^2, \quad
x_1^2 + x_2^2 \ge 2x_2^2 + 2 x_2 x_4 + x_4^2,
$$
whence we have
$$
x_2 \ge x_1 + x_3, \quad x_1 \ge x_2 + x_4,
$$
which is impossible due to the fact that all $x_j$ are  positive numbers.

The cases (ii) and (iii) are treated similarly.
\end{proof}

\begin{proposition}\label{per-sluch}
In the above notation, there are only
 the following cases up to renumbering of faces:

(1) $d_1 = d_2 = d_3 = d_4 = 1$,  and all numbers $\varepsilon_{ij} = 0$;

(2) $d_1 = d_2 = d_3 =  1$, $d_4 = 2$, $\varepsilon_{12}
= \varepsilon_{13} =\varepsilon_{23} = 0$,
$\varepsilon_{14}, \varepsilon_{24} \le 1$;

\hskip 0.5cm (2.0) all $\varepsilon_{ij} = 0$;

\hskip 0.5cm (2.1) $\varepsilon_{14} = 1$, $\varepsilon_{12}
= \varepsilon_{13} =\varepsilon_{23} =
\varepsilon_{24}= 0$;

(3) $d_1 = d_2 = 1$, $d_3 = d_4 = 2$, $\varepsilon_{12} = 0$,
all other $\varepsilon_{ij}$ can equal $0$ or $1$;

\hskip 0.5cm (3.0) all $\varepsilon_{ij} = 0$;

\hskip 0.5cm (3.1)  $\varepsilon_{13} = 1$, $\varepsilon_{12}
= \varepsilon_{14} =\varepsilon_{23} =
\varepsilon_{24}= 0$;

\hskip 0.5cm (3.2)  $\varepsilon_{13} = \varepsilon_{14} = 1$,
$\varepsilon_{12}  =\varepsilon_{23} =
\varepsilon_{24}= 0$;

(4) $d_1 = 1$, $d_2 =  2$,  $d_3 = d_4 = 1$, $\varepsilon_{13}
= \varepsilon_{14} =
0$, $\varepsilon_{12}, \varepsilon_{23},  \varepsilon_{24} \le 1$;

\hskip 0.5cm (4.0) all $\varepsilon_{ij} = 0$;

\hskip 0.5cm (4.1) precisely one number  $\varepsilon_{ij}$ equals $1$;

\hskip 1cm (4.1.1)  $\varepsilon_{12} = 1$;

\hskip 1cm (4.1.2)  $\varepsilon_{23} = 1$;

\hskip 0.5cm (4.2) precisely two numbers  $\varepsilon_{ij}$ equal $1$;

\hskip 1cm (4.2.1)  $\varepsilon_{23} = \varepsilon_{24} = 1$;

(5) $d_1 = 1$, $d_2 = d_3 = 2$,  $d_4 = 1$, $\varepsilon_{14} = 0$,
all other $\varepsilon_{ij}$ can equal  $0$ or $1$;

\hskip 0.5cm (5.0) all $\varepsilon_{ij} = 0$;

\hskip 0.5cm (5.1) precisely one number  $\varepsilon_{ij}$ equals $1$;

\hskip 1cm (5.1.1)  $\varepsilon_{12} = 1$;

\hskip 1cm (5.1.2)  $\varepsilon_{13} = 1$;

\hskip 1cm (5.1.3)  $\varepsilon_{23} = 1$;

\hskip 0.5cm (5.2) precisely two numbers $\varepsilon_{ij}$ equal $1$;

\hskip 1cm (5.2.1)  $\varepsilon_{12} = \varepsilon_{23} = 1$;

\hskip 1cm (5.2.2)  $\varepsilon_{13} = \varepsilon_{23} = 1$;

\hskip 1cm (5.2.3)  $\varepsilon_{23} = \varepsilon_{24} = 1$;

(6) $d_1 = 1$, $d_2 =  d_3 = d_4 = 2$, $\varepsilon_{ij}$ can equal $0$ or $1$;

\hskip 0.5cm (6.0) all $\varepsilon_{ij} = 0$;

\hskip 0.5cm (6.1) precisely one number  $\varepsilon_{ij}$  equals $1$;

\hskip 1cm (6.1.1)  $\varepsilon_{12} = 1$;

\hskip 1cm (6.1.2)  $\varepsilon_{13} = 1$;

\hskip 1cm (6.1.3)  $\varepsilon_{23} = 1$;

\hskip 0.5cm (6.2) precisely two numbers  $\varepsilon_{ij}$  equal $1$;

\hskip 1cm (6.2.1)  $\varepsilon_{12} = \varepsilon_{23} = 1$;

\hskip 1cm (6.2.2)  $\varepsilon_{13} = \varepsilon_{23} = 1$;

\hskip 1cm (6.2.3)  $\varepsilon_{13} = \varepsilon_{14} = 1$;

\hskip 1cm (6.2.4)  $\varepsilon_{13} = \varepsilon_{24} = 1$;

\hskip 1cm (6.2.5)  $\varepsilon_{23} = \varepsilon_{24} = 1$;

\hskip 0.5cm (6.3) precisely three numbers $\varepsilon_{ij}$ equal $1$,
all other ones equal $0$;

\hskip 1cm (6.3.1)  $\varepsilon_{13} = \varepsilon_{14} = 0$;

\hskip 1cm (6.3.2)  $\varepsilon_{12} = \varepsilon_{14} = 0$;

\hskip 1cm (6.3.3)  $\varepsilon_{12} = \varepsilon_{24} = 0$;

\hskip 0.5cm (6.4) precisely four numbers $\varepsilon_{ij}$ equal $1$,
the last number  equals $0$;

\hskip 1cm (6.4.1)  $\varepsilon_{12} = 0$;

(7) $d_1 = d_2 = 2$, $d_3 = d_4 = 1$,  $\varepsilon_{ij}$ can equal $0$ or $1$;

\hskip 0.5cm (7.0) all $\varepsilon_{ij} = 0$;

\hskip 0.5cm (7.1) precisely one number $\varepsilon_{ij}$  equals $1$;

\hskip 1cm (7.1.1)  $\varepsilon_{12} = 1$;

\hskip 1cm (7.1.2)  $\varepsilon_{13} = 1$;

\hskip 0.5cm (7.2) precisely two numbers  $\varepsilon_{ij}$ equal $1$;

\hskip 1cm (7.2.1)  $\varepsilon_{12} = \varepsilon_{13} = 1$;

\hskip 1cm (7.2.2)  $\varepsilon_{13} = \varepsilon_{14} = 1$;

\hskip 0.5cm (7.3) precisely three numbers $\varepsilon_{ij}$ equal $1$, all other
ones  equal  $0$;

\hskip 1cm (7.3.1)  $\varepsilon_{23} = \varepsilon_{24} = 0$;

\hskip 1cm (7.3.2)  $\varepsilon_{13} = \varepsilon_{24} = 0$;

(8) $d_1 = d_2 = 2$, $d_3 = 2$, $d_4 = 1$, $\varepsilon_{ij}$ can  equal  $0$ or $1$;

\hskip 0.5cm (8.0) all $\varepsilon_{ij} = 0$;

\hskip 0.5cm (8.1) precisely one number  $\varepsilon_{ij}$ equals $1$;

\hskip 1cm (8.1.1)  $\varepsilon_{12} = 1$;

\hskip 1cm (8.1.2)  $\varepsilon_{23} = 1$;

\hskip 1cm (8.1.3)  $\varepsilon_{14} = 1$;

\hskip 0.5cm (8.2) precisely two numbers $\varepsilon_{ij}$  equal  $1$;

\hskip 1cm (8.2.1)  $\varepsilon_{12} = \varepsilon_{13} = 1$;

\hskip 1cm (8.2.2)  $\varepsilon_{12} = \varepsilon_{14} = 1$;

\hskip 1cm (8.2.3)  $\varepsilon_{13} = \varepsilon_{14} = 1$;

\hskip 1cm (8.2.4)  $\varepsilon_{14} = \varepsilon_{23} = 1$;

\hskip 1cm (8.2.5)  $\varepsilon_{13} = \varepsilon_{23} = 1$;

\hskip 0.5cm (8.3) precisely three numbers  $\varepsilon_{ij}$  equal $1$,
all other ones equal $0$;

\hskip 1cm (8.3.1)  $\varepsilon_{23} = \varepsilon_{24} = 0$;

\hskip 1cm (8.3.2)  $\varepsilon_{23} = \varepsilon_{14} = 0$;

\hskip 1cm (8.3.3)  $\varepsilon_{12} = \varepsilon_{14} = 0$;

(9) $d_1 = d_2 = d_3 = d_4 = 2$, $\varepsilon_{ij}$ can  equal $0$ or $1$.

\hskip 0.5cm (9.0) all $\varepsilon_{ij} = 0$;

\hskip 0.5cm (9.1) precisely one number $\varepsilon_{ij}$ equals $1$;

\hskip 1cm (9.1.1)  $\varepsilon_{12} = 1$;

\hskip 1cm (9.1.2)  $\varepsilon_{23} = 1$;

\hskip 0.5cm (9.2) precisely two numbers  $\varepsilon_{ij}$ equal $1$;

\hskip 1cm (9.2.1)  $\varepsilon_{12} = \varepsilon_{13} = 1$;

\hskip 1cm (9.2.2)  $\varepsilon_{13} = \varepsilon_{23} = 1$;

\hskip 1cm (9.2.3)  $\varepsilon_{13} = \varepsilon_{14} = 1$;

\hskip 1cm (9.2.4)  $\varepsilon_{13} = \varepsilon_{24} = 1$;

\hskip 0.5cm (9.3) precisely three numbers $\varepsilon_{ij}$ equal $1$,
all other ones equal $0$;

\hskip 1cm (9.3.1)  $\varepsilon_{12} = \varepsilon_{13} = 0$;

\hskip 1cm (9.3.2)  $\varepsilon_{13} = \varepsilon_{14} = 0$;

\hskip 1cm (9.3.3)  $\varepsilon_{13} = \varepsilon_{24} = 1$;

\hskip 0.5cm (9.4) precisely four numbers of $\varepsilon_{ij}$ equal $1$,
the last number equals $0$;

\hskip 1cm (9.4.1)  $\varepsilon_{12} = 0$.

\end{proposition}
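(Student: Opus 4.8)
The plan is to turn the statement into a finite, purely combinatorial enumeration of Gram data and then to run through the resulting cases. Since $L$ is anisotropic, the fundamental polyhedron $M$ is compact, so the two vertices $V_1$ and $V_2$ of the outermost edge $E$ are proper points of $\mathbb{L}^3$; as already noted, both are simple, so exactly three faces meet at each. These are $F_1, F_2, F_3$ at $V_1$ and $F_1, F_2, F_4$ at $V_2$. Because $V_1$ is a proper point, its outward normals span the spacelike (positive-definite) subspace $V_1^{\perp}$, so the Gram matrix $G(e_1, e_2, e_3)$ is positive definite; likewise $G(e_1, e_2, e_4)$ is positive definite. Moreover, each of the five pairs $(1,2), (1,3), (2,3), (1,4), (2,4)$ consists of faces sharing a vertex, hence intersecting, so $\varepsilon_{ij} = \sqrt{d_i d_j}\,\cos \alpha_{ij}$ with $0 \le \cos \alpha_{ij} < 1$; as $\varepsilon_{ij}$ is a nonnegative integer and $\sqrt{d_i d_j} \le 2$, this gives $\varepsilon_{ij} \in \{0, 1\}$, as recorded above. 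Only the pair $(3,4)$ may diverge, and its entry is the separately bounded integer $T$.

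First I would reduce the choice of $(d_1, d_2, d_3, d_4) \in \{1, 2\}^4$ modulo the relabelings that preserve the combinatorial roles of the faces. One may freely interchange the two faces $F_1, F_2$ through $E$, giving the permutation $(1\,2)$, and one may rename the two endpoints of $E$, which interchanges the framing faces $F_3, F_4$, giving $(3\,4)$. These generate a group isomorphic to $(\mathbb{Z}/2)^2$ acting on the $d$-tuple by $d_1 \leftrightarrow d_2$ and $d_3 \leftrightarrow d_4$ (and correspondingly $\varepsilon_{13} \leftrightarrow \varepsilon_{23}$, $\varepsilon_{14} \leftrightarrow \varepsilon_{24}$ under $(1\,2)$, and $\varepsilon_{13} \leftrightarrow \varepsilon_{14}$, $\varepsilon_{23} \leftrightarrow \varepsilon_{24}$ under $(3\,4)$). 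A direct count shows that the sixteen tuples split into exactly nine orbits, and a choice of representatives gives precisely the nine cases $(1)$--$(9)$ of the statement.

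For each representative I would then determine the admissible tuples $(\varepsilon_{12}, \varepsilon_{13}, \varepsilon_{14}, \varepsilon_{23}, \varepsilon_{24}) \in \{0,1\}^5$ from Sylvester's criterion for the two positive-definite matrices. The $2\times 2$ leading minors give $\varepsilon_{ij}^2 < d_i d_j$, so whenever $d_i = d_j = 1$ for one of the five pairs, the corresponding $\varepsilon_{ij}$ must vanish; this alone settles the cases with many $1$'s among the $d_j$ (for example it forces all $\varepsilon_{ij} = 0$ in case $(1)$). The two $3 \times 3$ determinants yield the sharper inequalities
$$
d_1 d_2 d_3 - d_1 \varepsilon_{23}^2 - d_2 \varepsilon_{13}^2 - d_3 \varepsilon_{12}^2 - 2 \varepsilon_{12}\varepsilon_{13}\varepsilon_{23} > 0,
$$
$$
d_1 d_2 d_4 - d_1 \varepsilon_{24}^2 - d_2 \varepsilon_{14}^2 - d_4 \varepsilon_{12}^2 - 2 \varepsilon_{12}\varepsilon_{14}\varepsilon_{24} > 0,
$$
which, e.g.\ in case $(9)$ with all $d_i = 2$, exclude exactly the tuples having $\varepsilon_{12} = \varepsilon_{13} = \varepsilon_{23} = 1$ or $\varepsilon_{12} = \varepsilon_{14} = \varepsilon_{24} = 1$. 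To the surviving tuples I would apply Lemma \ref{lem-nevozm}, discarding the configurations with $\varepsilon_{12} = 0$ together with $\alpha_{13} = \alpha_{24} = \pi/4$ (that is, $\varepsilon_{13} = \varepsilon_{24} = 1$ with $d_1 \ne d_3$ and $d_2 \ne d_4$) and the mirror configuration with $\alpha_{14} = \alpha_{23} = \pi/4$. Finally, inside each $d$-case I would quotient the admissible $\varepsilon$-tuples by the residual symmetry, namely the stabilizer of the $d$-tuple in $(\mathbb{Z}/2)^2$; picking one representative per orbit reproduces the subcase labels $(k.0), (k.1.\ell), (k.2.\ell), \dots$ exactly as listed.

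The $2 \times 2$ minor bookkeeping is routine; the main obstacle is the casework in cases $(6)$--$(9)$, where most of the $d_i$ equal $2$ and hence almost no $\varepsilon_{ij}$ is forced to zero. There the two cubic inequalities and the two exclusions of Lemma \ref{lem-nevozm} must be combined, and the residual symmetry tracked with care so that every admissible configuration is listed once and only once; matching the resulting orbits against the long list of subcases under case $(9)$ is the most delicate point.
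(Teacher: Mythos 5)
Your proposal is correct and follows essentially the same route as the paper: the same $(\mathbb{Z}/2)^2$ relabelling symmetry (swapping $F_1\leftrightarrow F_2$ and $F_3\leftrightarrow F_4$) produces the nine $d$-cases, the admissible tuples $\varepsilon_{ij}$ are then cut down by a condition at each simple vertex together with Lemma~\ref{lem-nevozm}, and the residual stabilizer of the $d$-tuple is used to list one representative per orbit. The only cosmetic difference is that you phrase the vertex condition as positive definiteness of $G(e_1,e_2,e_3)$ and $G(e_1,e_2,e_4)$ via Sylvester's criterion, whereas the paper's Lemma~\ref{vertices} imposes the equivalent requirement that the sum of the dihedral angles at a simple vertex of $\mathbb{L}^3$ be strictly greater than $\pi$.
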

\begin{proof}
First of all we observe
that we have a variety of options for the location of units and twos on the
diagonal of the Gram matrix $G(e_0, e_1, e_2, e_3, e_4)$. The cases where
 all $d_j = 1$ or
all $d_j = 2$ are covered by general assertions (1) and (9) of this proposition.
If precisely one number of $d_j$ equals  $2$, then, up to renumbering
 of faces, we distinguish  only two cases: the first, where
  one of the faces $F_1$ or $F_2$
corresponds to a $2$-reflection, and the second, where one of the framing faces
($F_3$ or $F_4$) corresponds to this reflection.
These cases are covered by
(2) and (4). Similarly we consider the two cases where
precisely one number  $d_j$ equals $1$, which is covered by
(6) and (8). Finally, we have the cases where
precisely two numbers among $d_j$  equal $1$ and the other two equal $2$.
These cases are distinguished as follows: when $2$-reflections correspond
to the faces $F_1$ and $F_2$ (case~(7)), when one of  $2$-reflections
corresponds to the face containing
the edge $E$ and the second $2$-reflection
 corresponds to one of the framing faces (case~(5)),
and when both $2$-reflections correspond
to the framing faces (case~(3)).

In  case (1) all matrix elements (excepting the number $T$) are uniquely
determined. In all other
cases some of the numbers $\varepsilon_{ij}$ are uniquely determined,
and some are not, but in every
case all matrix elements satisfy the inequalities from
Proposition~\ref{oz-rasst-zel-matr-gr}.

We now observe that each of the vertices  $V_1$ and $V_2$ is characterized  by
the set of faces containing it
and the dihedral angles between them, i.e., for one of these vertices  we have a collection of
numbers
$(d_1, d_2, d_3; \varepsilon_{12}, \varepsilon_{13}, \varepsilon_{23})$,
and for the second one we have
$(d_1, d_2, d_4; \varepsilon_{12}, \varepsilon_{14}, \varepsilon_{24})$.

\begin{lemma}\label{vertices}
The outermost edge $E$ cannot have vertices of the following types:

$(1, 1, 2; 0, 1, 1)$, $(2, 1, 1; 1, 1, 0)$, $(1, 2, 1; 1, 0, 1)$,
$(2, 2, 1; 0, 1, 1)$, $(2, 1, 2; 1, 0, 1)$,\\
$(1, 2, 2; 1, 1, 0)$,
$(2, 2, 1; 1, 1, 1)$, $(2, 1, 2; 1, 1, 1)$, $(1, 2, 2; 1, 1, 1)$,
$(2, 2, 2; 1, 1, 1)$.
\end{lemma}
\begin{proof}
Indeed, for the first six collections, the sums of the dihedral angles at the corresponding vertices  equal
$$
\dfrac{\pi}{2}+\dfrac{\pi}{4}+\dfrac{\pi}{4} = \pi,
$$
for the next three collections these sums equal
$$
\dfrac{\pi}{3}+\dfrac{\pi}{4}+\dfrac{\pi}{4} < \pi,
$$
and for the last collection such a sum  equals
$$
\dfrac{\pi}{3}+\dfrac{\pi}{3}+\dfrac{\pi}{3} = \pi.
$$
Thus,
for all these collections we have that the sum of the dihedral angles at the simple vertex
in the three-dimensional Lobachevsky space is less than or equal to $\pi$, but it must be strictly larger than $\pi$.
\end{proof}

In case (2) only the numbers $\varepsilon_{14}$ and
$\varepsilon_{24}$ can  equal $1$. When both are zero, we obtain case
(2.0).
When only one of them equals $1$, we obtain two
identical cases up to renumbering, and they are covered by case (2.1).
But these numbers cannot  equal $1$ simultaneously by  Lemma \ref{vertices}.

In case (3) we  obtain immediately that all cases with precisely one unit
are symmetric to each other, i.e., it suffices to consider case (3.1).
If we have precisely two units in the collection of numbers $\varepsilon_{ij}$, then by symmetry and Lemmas
\ref{vertices} and \ref{lem-nevozm} it remains to consider only case (3.2).
It follows from the same lemmas that the case with three units in the collection of numbers
$\varepsilon_{ij}$ is impossible.

The remaining cases are considered similarly. Some options can be
 omitted, because
they differ one from another only by renumbering of faces, and also
there are some impossible cases, which are described in Lemmas \ref{vertices}
and \ref{lem-nevozm}.
\end{proof}

\begin{proposition}\label{sluch-perp}
Let all intersecting faces $F_j$ be pairwise perpendicular,
i.e., $\varepsilon_{ij} = 0$ for all $i,j$.
Then only the following two options for the number $T$ are possible:

(1) $T=3$ if $d_3 = d_4 = 2$;

(2) $T = 2$ if $d_3 = 2$, $d_4 = 1$ or $d_3 = 1$, $d_4 = 2$.

It follows from this that case (1) of Proposition \ref{per-sluch} is impossible.
\end{proposition}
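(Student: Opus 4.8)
The plan is to squeeze $T$ from two sides, using the block structure that the hypothesis $\varepsilon_{ij}=0$ imposes on the Gram matrix. Writing $A=G(e_1,e_2,e_3,e_4)$, the perpendicularity of all intersecting faces makes $A$ block-diagonal,
$$
A=\begin{pmatrix} d_1 & 0 & 0 & 0 \\ 0 & d_2 & 0 & 0 \\ 0 & 0 & d_3 & -T \\ 0 & 0 & -T & d_4 \end{pmatrix},\qquad \det A=d_1 d_2\,(d_3 d_4-T^2).
$$
First I would record that $e_1,\dots,e_4$ are linearly independent, since they generate a sublattice $L'$ of finite index in $L$; hence they form a basis of $V=L\otimes\mathbb{R}\cong\mathbb{E}^{3,1}$, so $A$ has signature $(3,1)$ and $\det A<0$. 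As $d_1,d_2>0$, this forces $T^2>d_3 d_4$, i.e. $T>\sqrt{d_3 d_4}$. This is the lower bound, and it already shows that $F_3,F_4$ diverge.

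For the upper bound I would use the identity $\det G(e_0,e_1,e_2,e_3,e_4)=0$, which holds because five vectors in a four-dimensional space are dependent. Writing $G$ in bordered form with $b=(x_1,x_2,x_3,x_4)^{T}$ and applying the Schur-complement formula $\det G=-\det A\,(1+b^{T}A^{-1}b)$ together with $\det A\neq 0$ yields $b^{T}A^{-1}b=-1$. Setting $a_i:=x_i^2/d_i>0$ and inverting the $2\times 2$ block, this unfolds into the key identity
$$
a_1+a_2+1=\frac{d_3 d_4(a_3+a_4)+2T\sqrt{d_3 d_4\,a_3 a_4}}{T^2-d_3 d_4}.
$$

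Next I would feed in Proposition~\ref{oz-rasst-zel-matr-gr}. With every $\varepsilon_{ij}=0$ the inequalities (\ref{ur-1})--(\ref{ur-4}) collapse to $a_3\le a_1$, $a_3\le a_2$, $a_4\le a_1$, $a_4\le a_2$, so $a_3,a_4\le m:=\min(a_1,a_2)$. Then $a_3+a_4\le 2m$ and $\sqrt{a_3 a_4}\le m$, so the numerator above is at most $2m\sqrt{d_3 d_4}\,(\sqrt{d_3 d_4}+T)$; cancelling the factor $\sqrt{d_3 d_4}+T$ against $T^2-d_3 d_4=(T-\sqrt{d_3d_4})(T+\sqrt{d_3d_4})$ gives
$$
a_1+a_2+1\le\frac{2m\sqrt{d_3 d_4}}{\,T-\sqrt{d_3 d_4}\,}.
$$
Since $a_1+a_2+1\ge 2m+1$, clearing denominators yields $(2m+1)T\le(4m+1)\sqrt{d_3 d_4}$, hence $T\le\frac{(4m+1)\sqrt{d_3 d_4}}{2m+1}<2\sqrt{d_3 d_4}$.

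Finally I would combine $\sqrt{d_3 d_4}<T<2\sqrt{d_3 d_4}$ with the fact that $T$ is a positive integer. For $d_3=d_4=1$ this reads $1<T<2$, an empty range, so this configuration cannot occur; as it is exactly case~(1) of Proposition~\ref{per-sluch}, that case is impossible. For $\{d_3,d_4\}=\{1,2\}$ it reads $\sqrt2<T<2\sqrt2$, forcing $T=2$; and for $d_3=d_4=2$ it reads $2<T<4$, forcing $T=3$. The main obstacle is the upper bound: one has to notice that the perpendicularity hypothesis reduces the outermost-edge inequalities precisely to $a_3,a_4\le\min(a_1,a_2)$, and that substituting this into the determinant identity makes the factor $\sqrt{d_3 d_4}+T$ cancel, leaving the clean estimate $T<2\sqrt{d_3 d_4}$; the surrounding algebra is routine once this cancellation is spotted.
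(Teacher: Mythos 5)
Your proof is correct and follows essentially the same route as the paper: both exploit the vanishing of the bordered determinant $\det G(e_0,e_1,\ldots,e_4)=0$ together with the outermost-edge inequalities (which under $\varepsilon_{ij}=0$ reduce to $x_3^2/d_3,\,x_4^2/d_4\le\min(x_1^2/d_1,x_2^2/d_2)$) to squeeze $\sqrt{d_3d_4}<T<2\sqrt{d_3d_4}$ and then invoke integrality of $T$. The only cosmetic differences are that you obtain the lower bound from the signature of $G(e_1,\ldots,e_4)$ rather than from the divergence of $F_3,F_4$, and you bound the Schur-complement identity directly instead of solving the quadratic in $f=T/\sqrt{d_3d_4}$ as the paper does.
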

\begin{proof}
Reducing all elements of the matrix if  needed (namely,
we can reduce the first column and row by $\sqrt{d_1}$,
the second  ones by $\sqrt{d_2}$, the third ones by $\sqrt{d_3}$, and the fourth  by
$\sqrt{d_4}$), we can assume in this case that the Gram matrix
 has the form
$$
G(e_0, e_1, e_2, e_3, e_4) =
\begin{pmatrix}
-1 & -y_1 & -y_2 & -y_3 & -y_4\\
-y_1 & 1 & 0 & 0 & 0\\
-y_2 & 0 & 1 & 0 & 0\\
-y_3 & 0  & 0 & 1 & -f\\
-y_4 & 0 & 0 & -f & 1
\end{pmatrix},
$$
i.e.,
$$
y_j = \dfrac{x_j}{\sqrt{d_j}}, \quad f = \dfrac{T}{\sqrt{d_3 d_4}},
$$
moreover, we can assume that $y_3 \le y_4$ and that the  inequalities from Proposition \ref{oz-rasst-zel-matr-gr} have the following form:
$$
y_3 \le y_4 \le y_1, y_2.
$$
Then
$$
\det G(e_0, e_1, e_2, e_3, e_4) = (y_1^2 + y_2^2 + 1)f^2 - 2 y_3 y_4 f
- (y_1^2 + y_2^2 + y_3^2 +
y_4^2 + 1) = 0,
$$
i.e.,
$$
f = \dfrac{y_3 y_4 + \sqrt{y_3^2 y_4^2 + (y_1^2 + y_2^2 + 1)(y_1^2 + y_2^2
+ y_3^2 + y_4^2
+ 1)}}{y_1^2 + y_2^2 + 1} \le A + \sqrt{A^2 + B + 1},
$$
where
$$
A:= \dfrac{y_3 y_4}{y_1^2 + y_2^2 + 1}, \quad B:=  \dfrac{y_3^2
+ y_4^2}{y_1^2 + y_2^2 + 1}.
$$
Therefore,
$$
2A \le B  \le \dfrac{y_1^2 + y_2^2}{y_1^2 + y_2^2 + 1} < 1,
$$
i.e.,
$$
f < 0.5 + \sqrt{0.25 + 1 + 1} = 2
$$
It remains to observe that $f  > 1$, since the faces
$F_3$ and $F_4$ diverge in this case, whence we have
$$
1 < \dfrac{T}{\sqrt{d_3 d_4}} < 2.
$$
Hence the integer number $T$ lies in the interval $(1; 4)$,
i.e., it can equal only  $2$ or $3$.
If $T=2$, then $1 \le \sqrt{d_3 d_4} < 2 < 2\sqrt{d_3 d_4}
 \le 4$, whence we obtain that one
of the numbers $d_3$  or $d_4$ equals $2$ and the second one equals $1$,
and this
brings us to case (2) of this theorem. Similarly, if $T=3$, then $d_3 = d_4 = 2$,
otherwise $\dfrac{T}{\sqrt{d_3 d_4}} > 2$.
\end{proof}

Let us  formulate the final result for the number $T$.

\begin{theorem}\label{ozenki-T-rasst}
In the above notation, suppose that for the number $T$ we have

$T = - \cosh \rho (F_3, F_4) \sqrt{d_3 d_4}$ if the faces $F_3$ and $F_4$ diverge,

$T = -\sqrt{d_3 d_4}$ if these faces are parallel,

$T = - \cos \angle(F_3, F_4) \sqrt{d_3 d_4}$ if they intersect.
Then the bounds on the number $T$ in all cases of Proposition \ref{per-sluch} are
given in the following table:

\vskip 0.5cm
\begin{footnotesize}
\begin{tabular}{|c|c|*{10}{c|}|c|}
\hline
\multirow{2}*{\#1} & \multicolumn{8}{|c|}{Points of the
 Proposition \ref{per-sluch}} \\ \cline{2-9}
 & (2) & (3) & (4) & (5) & (6) & (7) & (8) & (9)\\
\hline
\hline
0 &   $T=2$ &  $T=3$ &    &  $T=2$ &  $T=3$ &   &  $T=2$ &  $T=3$ \\
\hline
\hline
\multirow{3}*{1} &   $\varepsilon_{14} = 1$ &  $\varepsilon_{13} = 1$
&  $\varepsilon_{12} = 1$ &  $
 \varepsilon_{12} = 1$ &  $\varepsilon_{12} = 1$ &  $\varepsilon_{12} = 1$
&  $\varepsilon_{12} = 1$ &
 $\varepsilon_{12} = 1$ \\
&   $T<2$ & $T<3$ & $T<3$ & $T<5$ & $T<4$ & $T<3$ &  $T<4$ & $T<5$
 \\ \cline{2-9}
&    &  &  $\varepsilon_{23} = 1$ & $\varepsilon_{13} = 1$
& $\varepsilon_{13}
 = 1$ &
$\varepsilon_{13} = 1$ & $\varepsilon_{23} = 1$ & $\varepsilon_{23} = 1$
\\
&   &  &  $T<3$ & $T<2$ & $T<4$ & $T<2$ & $T<7$ & $T<5$ \\ \cline{4-9}
&   &  &  & $\varepsilon_{23} = 1$ &  $\varepsilon_{23} = 1$
&  & $\varepsilon_{14} = 1$ & \\
&  &  &  & $T<3$ & $T<4$ &  & $T<4$ &  \\ \cline{6-8}
\hline
\hline
\multirow{3}*{2} &    &  $\varepsilon_{13} =\varepsilon_{14} = 1$ &
$\varepsilon_{23} =\varepsilon_{24} = 1$ &   $\varepsilon_{12}
=\varepsilon_{23} = 1$ &  $\varepsilon_{12} =\varepsilon_{23} = 1$
&   $\varepsilon_{12} =\varepsilon_{13} = 1$ &  $\varepsilon_{12}
=\varepsilon_{13} = 1$ &   $\varepsilon_{12} =\varepsilon_{13} = 1$ \\
&  & $T<6$ & $T<3$ & $T<5$ & $T<8$ & $T<6$ &  $T<5$ & $T<6$
\\ \cline{3-9}
&  &  &  &  $\varepsilon_{13} =\varepsilon_{23} = 1$ &
$\varepsilon_{13} =\varepsilon_{23} = 1$ & $\varepsilon_{13}
=\varepsilon_{14} = 1$ &
$\varepsilon_{12} =\varepsilon_{14} = 1$ & $\varepsilon_{13}
=\varepsilon_{23} = 1$  \\
  &  &  &   & $T<5$ & $T<5$ & $T<3$ & $T<7$ & $T<4$ \\ \cline{5-9}
  &  &  &  & $\varepsilon_{23} =\varepsilon_{24} = 1$
& $\varepsilon_{13} =\varepsilon_{14} = 1$ &  & $\varepsilon_{13}
=\varepsilon_{14} = 1$ &
  $\varepsilon_{13} =\varepsilon_{14} = 1$\\
  &  &  &  & $T<4$  & $T<6$ &  & $T<7$ & $T<5$ \\ \cline{5-9}
  &  &  &  &  & $\varepsilon_{13} =\varepsilon_{24} = 1$& &
$\varepsilon_{14} =\varepsilon_{23} = 1$   &$\varepsilon_{13}
=\varepsilon_{24} = 1$\\
  &  &  &  & & $T<4$ &  & $T<7$ &  $T<4$ \\ \cline{6-9}
&  &  &  &  & $\varepsilon_{23} =\varepsilon_{24} = 1$& &
$\varepsilon_{13} =\varepsilon_{23} = 1$& \\
 &  &  &  &  & $T<6$ &  &  $T<4$ &  \\ \cline{6-9}
\hline
\hline
\multirow{3}*{3}  &  &   &  &    &  $\varepsilon_{13}
=\varepsilon_{14} = 0$ &  $\varepsilon_{23} =\varepsilon_{24} = 0$
&  $\varepsilon_{23} =\varepsilon_{24} = 0$ &  $\varepsilon_{12}
=\varepsilon_{13} = 0$  \\
& &  &  & & $T<8$ & $T<8$ &  $T<7$ & $T<6$ \\ \cline{6-9}
&  &  &  &  &  $\varepsilon_{12} =\varepsilon_{14} = 0$ &
$\varepsilon_{13} =\varepsilon_{24} = 0$
& $\varepsilon_{23} =\varepsilon_{14} = 0$ & $\varepsilon_{13}
=\varepsilon_{14} = 0$ \\
 &  &  &   &  & $T<7$ & $T<5$ & $T<7$ & $T<7$ \\ \cline{6-9}
 &  &  &  &  & $\varepsilon_{12} =\varepsilon_{24} = 0$ &
& $\varepsilon_{12} =\varepsilon_{14} = 0$
 &   $\varepsilon_{13} =\varepsilon_{24} = 0$\\
  &  &  &  &  & $T<6$ &  & $T<7$ &  $T<7$\\ \cline{6-9}
\hline
\hline
\multirow{2}*{4}   &  &  &  &   & $\varepsilon_{12} = 0$ &  &
&  $\varepsilon_{12} = 0$ \\
&  &  &  & & $T<6$  &  &  &  $T<7$  \\ \cline{6-9}
\hline
\end{tabular}
\end{footnotesize}
\vskip 0.5cm

This table contains estimates for each case of Proposition \ref{per-sluch}.
To each cell of this table, we  associate the number of units in the collection of
numbers $\varepsilon_{ij}$ (the number of units is denoted by the symbol
$\# 1$) and also one of general cases (2)---(9).
\end{theorem}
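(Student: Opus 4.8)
The plan is to treat each of the subcases enumerated in Proposition \ref{per-sluch} by the same mechanism already used in the proof of Proposition \ref{sluch-perp}, the only new feature being that some of the $\varepsilon_{ij}$ are now equal to $1$. In every subcase the entries $d_i$ and $\varepsilon_{ij}$ of the extended Gram matrix $G(e_0,e_1,e_2,e_3,e_4)$ are fixed, and the only unknowns are the four positive numbers $x_1,\ldots,x_4$ and the integer $T=(e_3,e_4)$. Since $e_0,e_1,e_2,e_3,e_4$ are five vectors in the four-dimensional space $L\otimes\mathbb{R}$, they are linearly dependent and hence $\det G(e_0,e_1,e_2,e_3,e_4)=0$. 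The number $T$ occurs only in the two symmetric off-diagonal positions $(4,5)$ and $(5,4)$, so a cofactor expansion exhibits this determinant as a quadratic polynomial $\alpha T^2+\beta T+\gamma$ whose coefficients are explicit functions of $x_1,\ldots,x_4$ and of the fixed entries. Setting it to zero and solving, I take the larger root, which is the relevant one because the faces $F_3$ and $F_4$ diverge and therefore $T>\sqrt{d_3 d_4}$.

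To keep the formulas uniform across the different diagonals I would first normalize exactly as in Proposition \ref{sluch-perp}: dividing the $j$-th row and column by $\sqrt{d_j}$ turns all diagonal entries into $1$, replaces each $\varepsilon_{ij}$ by $\tilde\varepsilon_{ij}=\varepsilon_{ij}/\sqrt{d_i d_j}$, and introduces the normalized quantities $y_j=x_j/\sqrt{d_j}$ and $f=T/\sqrt{d_3 d_4}$. In these variables the four outermost-edge inequalities (\ref{ur-1})--(\ref{ur-4}) of Proposition \ref{oz-rasst-zel-matr-gr} become quadratic constraints relating the $y_j$; in the fully perpendicular situation they collapse to the simple ordering $y_3,y_4\le y_1,y_2$, and in the presence of nonzero $\tilde\varepsilon_{ij}$ they retain the same role of bounding the framing coordinates $y_3,y_4$ against the edge coordinates $y_1,y_2$. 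The value of $f$ extracted from the quadratic then takes the form $f=A+\sqrt{A^2+B+1}$, with $A$ and $B$ built from the $y_j$ and the $\tilde\varepsilon_{ij}$ as in Proposition \ref{sluch-perp}, and the task reduces to bounding both $A$ and $B$ from above over the feasible region.

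For each subcase I would carry out this optimization by the same elementary estimates used before: an AM--GM bound of the type $2y_3 y_4\le y_3^2+y_4^2$ controls the cross term, while the outermost-edge ordering controls the ratios $(y_3^2+y_4^2)/(y_1^2+y_2^2+1)$ and the mixed products. Taking the supremum of the resulting expression for $f$ and multiplying back by $\sqrt{d_3 d_4}$ yields the stated bound on $T$, which, since $T$ is an integer, is then recorded in the table. The suprema are approached only in degenerate limiting configurations of the $y_j$, which is exactly why the table entries are strict inequalities $T<N$ rather than attained equalities. The subcases with all $\varepsilon_{ij}=0$ (the row $\#1=0$ of the table) are precisely those already settled in Proposition \ref{sluch-perp}, giving $T=3$ when $d_3=d_4=2$ and $T=2$ in the mixed case; the remaining rows treat the progressively larger numbers of nonzero $\varepsilon_{ij}$, and any configuration forcing a forbidden vertex type or the pattern ruled out by Lemma \ref{lem-nevozm} is discarded at the outset.

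The main obstacle is not any single computation but the sheer bookkeeping: there are roughly forty subcases, and in each one must substitute the correct fixed data, expand the $5\times5$ determinant as a quadratic in $T$, identify the right combination of the inequalities (\ref{ur-1})--(\ref{ur-4}) that bounds the coefficients $A$ and $B$ in that case, and extract the sharp numerical supremum. The genuinely delicate point is the last two steps: once several $\tilde\varepsilon_{ij}$ equal $1$, the constraint region is no longer the simple box $y_3,y_4\le y_1,y_2$, so one must select the appropriate two or three of the outermost-edge inequalities and combine them to obtain a bound that is simultaneously valid and tight enough to land on the integer stated in the table.
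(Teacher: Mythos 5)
Your proposal follows essentially the same route as the paper's own (sketched) proof: in each subcase of Proposition \ref{per-sluch} the vanishing of $\det G(e_0,e_1,e_2,e_3,e_4)$ is viewed as a quadratic equation in $T$, the relevant (larger) root is written in the form $A+\sqrt{A^2+B}$, and $A$, $B$ are bounded via the outermost-edge inequalities of Proposition \ref{oz-rasst-zel-matr-gr}, exactly as in Proposition \ref{sluch-perp}. The paper likewise only records these main ideas and leaves the case-by-case optimization implicit, so your account matches it in both method and level of detail.
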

\begin{proof}
Let us present here only the main ideas of the proof  of this theorem.
It is readily seen that the proof reduces to investigation of cases of
Proposition \ref{per-sluch}. In each of this cases, we consider the determinant of the
Gram  matrix $G(e_0, e_1, e_2, e_3, e_4)$ similarly to the proof of
Proposition \ref{sluch-perp}.

In all these cases this determinant is a quadratic function with respect to the
integer valued variable $T$ with real parameters $x_1, x_2, x_3, x_4$:
$$
\det G(e_0, e_1, e_2, e_3, e_4)
= a(x_1, x_2, x_3, x_4) T^2 - 2b(x_1, x_2, x_3, x_4) T +
c(x_1, x_2, x_3, x_4)
= 0.
$$
For each case of Proposition \ref{per-sluch}
one can write some explicit bounds on the numbers
$x_j$ from Proposition \ref{oz-rasst-zel-matr-gr}
and then solve the quadratic equation
$$
T = \dfrac{b(x_1, x_2, x_3, x_4) + \sqrt{b(x_1, x_2, x_3, x_4)^2
+ a(x_1, x_2, x_3, x_4)c(x_1, x_2, x_3,
x_4)}}{a(x_1, x_2, x_3, x_4)} =
$$
$$
= A(x_1, x_2, x_3, x_4)
+ \sqrt{A^2(x_1, x_2, x_3, x_4) + B(x_1, x_2, x_3, x_4)},
$$
where
$$
A(x_1, x_2, x_3, x_4):
= \dfrac{b(x_1, x_2, x_3, x_4)}{a(x_1, x_2, x_3, x_4)}, \quad
B(x_1, x_2, x_3, x_4):
= \dfrac{c(x_1, x_2, x_3, x_4)}{a(x_1, x_2, x_3, x_4)}.
$$

It remains to bound the rational functions~$A(x_1, x_2, x_3, x_4)$ and~$B(x_1,
x_2, x_3, x_4)$ with the aid of the known estimates on $x_j$.
As a result we obtain some restrictions on the number $T$. For each of these
 subcases  we obtain the corresponding table.
\end{proof}

\section{Methods of verification of $1.2$-reflectivity of lattices}
\label{metod-issled-refl}

Here we describe some methods of verification
 of $1.2$-reflectivity of lattices.

\subsection{Vinberg's algorithm}

There is an affective algorithm of constructing
the fundamental polyhedron $M$
for groups generated by reflections.
We choose a basic point $v_0 \in \mathbb{E}^{n,1}$.
Denote by $\Gamma$ the group $O'(L)$
of integer linear transformations preserving the quadratic lattice $L$ and
not transposing
the future light cone and the past light cone.
Let $M_0$ be the fundamental polyhedral cone
for the group $(\Gamma_{v_o})_r = (\Gamma_r)_{v_0}$. Let
$H_1, \ldots , H_m$ be the sides of this cone and let $a_1, \ldots , a_m$ be the
corresponding outer normals. Then we can define the half-spaces
$$
H_k^- = \{x \in \mathbb{E}^{n,1}\colon (x, a_k) \le 0 \},
$$
in addition, we can define in the same way the half-space $H^-$
for every hyperplane $H$. Then we observe that the fundamental polyhedral cone
is the intersection of the half-spaces of this cone.

There is the unique camera of the group  $\Gamma_r$ contained in $M_0$ and
containing the point $v_0$. Inductively we can find the sides $H_{m+1}, \ldots$ of the
polyhedron $M$  and the corresponding outer normals $a_{m+1}, \ldots$.
Namely, at the $k$th step we pick a mirror $H_k$
and a vector $a_k$ orthogonal to it  such that

1) $(a_k, v_0) < 0$;

2) $(a_k, a_i) \le 0$ for all $i < k$;

3) the distance $\rho(v_0, H_k)$ is minimal under the conditions 1) and 2).

There is the following useful result (see
 Proposition~24 in~\cite{vinberg84abskr}).

\begin{proposition}
A quadratic lattice $L$ can have $k$-roots if and only if the
doubled largest invariant factor of the lattice $L$ is divisible by $k$.
\end{proposition}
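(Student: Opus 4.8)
The plan is to translate the definition of a $k$-root into a condition on the dual lattice $L^*$ and then read off the divisibility from the structure of the finite group $L^*/L$. First I would observe that, by the very definition of $L^*$, a primitive vector $e\in L$ with $(e,e)=k$ is a $k$-root precisely when $\frac{2e}{k}\in L^*$: the condition $2(e,x)\in k\mathbb{Z}$ for all $x\in L$ says exactly that $\left(\frac{2e}{k},x\right)\in\mathbb{Z}$ for all $x\in L$. Next I would recall the elementary-divisor normal form for the pair $L\subseteq L^*$: there is a basis $g_1,\dots,g_n$ of $L^*$ such that $E_1 g_1,\dots,E_n g_n$ is a basis of $L$, where $E_1\mid\cdots\mid E_n$ are the invariant factors of $L$. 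Consequently $L^*/L\cong\bigoplus_i \mathbb{Z}/E_i\mathbb{Z}$ has exponent $E_n$, so $E_n$ is the least positive integer with $E_n L^*\subseteq L$.

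For necessity, suppose $e$ is a $k$-root, so $\frac{2e}{k}\in L^*$. Multiplying by $E_n$ and using $E_n L^*\subseteq L$ gives $\frac{2E_n}{k}\,e = E_n\cdot\frac{2e}{k}\in L$. Since $e$ is primitive in $L$, no proper fractional multiple $\frac{p}{q}e$ (in lowest terms with $q>1$) can lie in $L$; hence the rational coefficient $\frac{2E_n}{k}$ must be an integer, i.e. $k\mid 2E_n$. This is the clean, self-contained half of the statement, and it is essentially the only direction actually exploited later, where one bounds which reflections can preserve a maximal lattice whose invariant factors are square-free.

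For sufficiency I would argue conversely that when $k\mid 2E_n$ the denominator obstruction disappears: taking $g:=g_n\in L^*$, of order $E_n$ in $L^*/L$, the vector $E_n g$ is a basis vector of $L$ (hence primitive), and $\frac{2}{k}(E_n g)=\frac{2E_n}{k}\,g\in L^*$ because $\frac{2E_n}{k}\in\mathbb{Z}$; thus the divisibility part of the root condition is met. The remaining, and genuinely harder, point is to realize the exact square length $(e,e)=k$: one must adjust the representative within its coset modulo $L$, exploiting that the ambient lattices are indefinite, so that some vector satisfying the same denominator condition actually has norm $k$. I expect this norm-realization to be the main obstacle, and I would handle it locally, using the Jordan decompositions $L_p=L_p^{(0)}\oplus[p]L_p^{(1)}\oplus\cdots$ recalled in Section~\ref{quadratic-forms} to produce, prime by prime, a vector of the prescribed square length in the relevant unimodular blocks, and then assemble a global $k$-root by the local--global principle for indefinite forms.
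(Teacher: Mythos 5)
The paper does not actually prove this proposition; it is quoted from Vinberg (Proposition~24 of \cite{vinberg84abskr}), so there is no in-paper argument to measure you against. Your necessity argument is correct and is the standard one: the root condition $2(e,x)\in k\mathbb{Z}$ for all $x\in L$ is exactly $\tfrac{2}{k}e\in L^*$; the exponent of $L^*/L$ is the largest invariant factor $E_n$; hence $\tfrac{2E_n}{k}e\in L$, and primitivity of $e$ forces $\tfrac{2E_n}{k}\in\mathbb{Z}$. This is the only half of the statement the paper ever uses (it invokes the proposition solely to restrict the admissible values of $k$ in Vinberg's algorithm), and your proof of that half is complete.

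The gap is in the converse, and it is not one you could have closed: the ``if'' direction is false as literally stated. Take $L=[-7]\oplus[1]\oplus[1]\oplus[1]$ (one of the two lattices of the main theorem) and $k=7$, which divides $2E_4=14$. The condition $2(e,x)\in 7\mathbb{Z}$ forces $e_1\equiv e_2\equiv e_3\equiv 0\pmod 7$, whence $(e,e)=-7e_0^2+49m$ for some integer $m$, and $(e,e)=7$ would require $e_0^2\equiv -1\pmod 7$, impossible since $7\equiv 3\pmod 4$; so $L$ has no $7$-roots. The paper itself tacitly concedes this point: in the analysis of $L(5)$ it supplements the divisibility condition $k\mid 30$ with further congruence conditions that eliminate $k=3,10,15,30$. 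So your instinct that realizing the exact norm $(e,e)=k$ is ``the main obstacle'' is sound, but no local--global argument can overcome it --- the local obstruction at $p=7$ in the example above is genuine. The proposition should be read (and is used) only as a necessary condition on $k$; restrict your proof to that direction, or reinterpret ``can have'' accordingly.
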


\begin{theorem} (\text{E.B. Vinberg} \cite{vinberg73descrgr})
The polyhedron  $M$ can be found in the following way:
$$
M = \bigcap \limits_k H_k^- = \{x \in \mathbb{E}^{n,1}\colon (x, a_i)
\le 0 \ \hbox{for all} \ i\},
$$
and, in addition, all $H_k$ are the sides of $M$.
\end{theorem}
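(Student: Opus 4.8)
The plan is to show that the family $\{H_k\}$ produced by the algorithm coincides exactly with the set of walls (codimension-$1$ faces) of the fundamental chamber $M$; both assertions then follow at once, since a fundamental chamber of the discrete reflection group $\Gamma_r$ is a convex polyhedron equal to the intersection of the half-spaces bounded by its walls, and the outer normals to distinct walls of such a Coxeter chamber have pairwise non-positive scalar products. Recall that a reflection of $\Gamma_r$ fixes $v_0$ iff its mirror passes through $v_0$, so the reflections through $v_0$ are exactly those of the finite group $(\Gamma_{v_0})_r$; hence the walls of $M$ through $v_0$ are precisely the sides $H_1,\dots,H_m$ of $M_0$, and it remains to treat the mirrors $H_a$ with outer normal $a$ satisfying $(a,v_0)<0$. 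Because $\Gamma_r$ acts discretely, only finitely many mirrors meet any bounded neighbourhood of $v_0$, so the rule ``minimal $\rho(v_0,H_k)$ subject to 1)--2)'' is well posed and enumerates the candidate mirrors in order of increasing distance.

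The geometric heart of the argument is the following elementary lemma, which I would prove first. Let $H_a,H_b$ be mirrors with outer normals $a,b$, and suppose $(v_0,a)<0$, $(v_0,b)\le 0$ and $(a,b)\le 0$. Then the whole perpendicular dropped from $v_0$ to $H_a$ stays in $\overline{H_b^{-}}$. Indeed, the foot is a positive multiple of $f'=v_0-\tfrac{(v_0,a)}{(a,a)}\,a$, and $(f',b)=(v_0,b)-\tfrac{(v_0,a)}{(a,a)}(a,b)\le (v_0,b)\le 0$, since $(v_0,a)<0$, $(a,a)>0$ and $(a,b)\le 0$. Parametrising the geodesic from $v_0$ to the foot as $\gamma(t)=\cosh(t)\,v_0+\sinh(t)\,u$ with $u$ a spacelike unit tangent, the function $t\mapsto(\gamma(t),b)$ has the form $A\cosh t+B\sinh t$ and so changes sign at most once; being $\le 0$ at both endpoints, it is $\le 0$ throughout, so the segment never crosses $H_b$ into $H_b^{+}$.

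Next I would prove that every selected $H_k$ is a wall of $M$ (this gives ``all $H_k$ are sides of $M$''), by induction on the distance. Suppose every mirror chosen before $H_a$ is a wall of $M$, and assume for contradiction that $H_a$ is not. A mirror never meets the interior of a chamber, so $H_a\cap\operatorname{int}M=\emptyset$; as $f\in H_a$, the foot $f$ of the perpendicular from $v_0$ to $H_a$ lies outside $\operatorname{int}M$, and the segment $[v_0,f]$ leaves $M$ through some wall $H_b$ at a point $q$ with $\rho(v_0,H_b)\le\rho(v_0,q)\le\rho(v_0,f)=\rho(v_0,H_a)$. Condition 2) is imposed also against the sides $H_1,\dots,H_m$ through $v_0$; for these the same computation (now with $(v_0,a_i)=0$) shows $u$ points into $M_0$, so the segment starts inside $M$ and can only exit through a far wall $H_b$ with $(v_0,b)<0$. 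If $\rho(v_0,H_b)<\rho(v_0,H_a)$ then $H_b$ was already selected, so $(a,b)\le 0$ by condition 2), and the Lemma forbids the crossing, a contradiction; the borderline case $\rho(v_0,H_b)=\rho(v_0,H_a)$ forces $q=f$ to be the common foot of the two perpendiculars, whence $H_a=H_b$, again absurd. Hence $H_a$ is a wall. The careful bookkeeping of these equal-distance ties, and of segments that only touch a lower-dimensional face of $M$, is the step I expect to be the main technical obstacle.

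Conversely, every wall of $M$ is eventually selected. For a wall $H_b$ with $(b,v_0)<0$ one has $(b,a')\le 0$ for the normal $a'$ of every other wall of $M$, by the acute-angledness of the Coxeter chamber $M$. Inducting on distance, assume every wall closer to $v_0$ than $H_b$ has been chosen; by the previous paragraph only walls are ever chosen, so all mirrors selected so far are walls of $M$, and therefore $(b,a_i)\le 0$ for each of them. Thus $H_b$ satisfies conditions 1)--2), and once all strictly closer selectable mirrors are exhausted it is of minimal remaining distance, hence selected. Consequently $\{H_k\}$ is exactly the set of walls of $M$, and since $M$ is the intersection of the half-spaces bounded by its walls we conclude $M=\bigcap_k H_k^{-}=\{x:(x,a_i)\le 0\ \text{for all}\ i\}$ with all $H_k$ sides of $M$, as asserted.
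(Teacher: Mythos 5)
First, a point of reference: the paper does not prove this statement at all --- it is quoted as a known theorem with a citation to \cite{vinberg73descrgr}, so there is no in-paper argument to compare yours with. Your reconstruction does follow the standard skeleton of Vinberg's original proof: the perpendicular-segment lemma (that $(a,b)\le 0$ and $(v_0,b)\le 0$ force the geodesic from $v_0$ to the foot on $H_a$ to stay in $\overline{H_b^-}$) is the right tool, the computation $(f',b)=(v_0,b)-\tfrac{(v_0,a)}{(a,a)}(a,b)\le 0$ is correct, and the two inductions (every selected mirror is a wall; every wall is eventually selected) are the right architecture, including the tie-breaking observation that a common foot of perpendiculars forces the two mirrors to coincide.

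There is, however, a genuine gap, and it is exactly the one you flag. Your contradiction rests on the dichotomy ``$f\notin\operatorname{int}M$, hence $[v_0,f]$ leaves $M$ by crossing some wall $H_b$ into $H_b^+$.'' This fails when $f$ lands on a face of $M$ of codimension at least $2$: the segment may stay inside $M$ and merely end on $\partial M$, no wall is crossed, and neither branch of your case analysis then yields a contradiction (the already-selected branch only gives $[v_0,f]\subset\overline{H_b^-}$, which is perfectly consistent with $f\in H_b$). This situation genuinely occurs: a mirror of $\Gamma_r$ can contain a codimension-$2$ face $F$ of $M$ without being a wall, e.g.\ the extra mirrors of the dihedral subgroup at an edge of $M$ with dihedral angle $\pi/p$, $p\ge 3$. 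Closing the gap requires the finite-reflection-group input you never state: the normals of all mirrors through $F$ form the root system of the finite group fixing $F$ pointwise in the positive-definite plane $\langle F\rangle^{\perp}$, the walls of $M$ through $F$ give the simple roots of the chamber cut out by $M$, and every other root $a$, oriented so that $(a,v_0)<0$, is a nonnegative combination $a=\sum c_i u_i$ of those simple roots, whence $(a,a)=\sum c_i(a,u_i)>0$ gives $(a,u_i)>0$ for some $i$. Since each such wall $H_{u_i}$ is either one of $H_1,\dots,H_m$ or strictly closer to $v_0$ than $H_a$ (hence already selected, by your minimality argument), this shows condition 2) already excludes $H_a$. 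Without this step the induction ``every selected mirror is a wall'' does not close, so the proof as written is incomplete.
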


To each vertex of the Coxeter polyhedron there corresponds either
an elliptic subdiagram
of rank~$3$
of the Coxeter diagram
(a simple vertex) or a parabolic subdiagram of rank $2$
(a vertex at infinity).
The polyhedron  has a finite volume  if and only if it has at least
one vertex and each edge coming out from its vertex ends in another vertex.
Thus,  the Coxeter diagram enables us to determine whether the polyhedron
has a finite volume.

\subsection{The method of ``bad'' reflections}

There are some cases where a lattice is reflective, but it is not so easy
to determine its $1.2$-reflectivity.
One can consider the group $\Delta$ generated by the ``bad'' reflections
(i.e., which are not $1$- and $2$-reflections)
 in the sides of the fundamental polyhedron of the group $O_r (L)$.
The following lemma holds (see \cite{vinberg07classhyplat}).

\begin{lemma}
A lattice $L$ is $1.2$-reflective if and only if
it is reflective and the group
$\Delta$ is finite.
\end{lemma}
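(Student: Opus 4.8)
The plan is to translate the statement into a comparison of covolumes and then into a purely combinatorial statement about the Coxeter diagram of $M$. Write $W = O_r(L)$ and $N = O^{(1.2)}_r(L)$, and let $M$ be the fundamental polyhedron of $W$; its facets are the mirrors of the reflections generating $W$, and I call a facet \emph{good} if the corresponding reflection is a $1$- or $2$-reflection and \emph{bad} otherwise, so that $\Delta$ is the group generated by the reflections in the bad facets. Since $N \subseteq W \subseteq O(L)$, the inclusion of indices immediately gives the easy half: if $L$ is $1.2$-reflective then $[O(L):W] \le [O(L):N] < \infty$, so $L$ is reflective. The substance of the lemma is therefore the equivalence, \emph{assuming $L$ reflective} (equivalently $\mathrm{vol}(M) < \infty$, by the finite-volume criterion recalled in the introduction), between ``$N$ has finite index in $W$'' and ``$\Delta$ is finite''. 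First I would record that $M^{(1.2)}$, being a fundamental domain for the subgroup $N$ of $W$, is tiled by $[W:N]$ copies of $M$, so that $\mathrm{vol}(M^{(1.2)}) = [W:N]\cdot \mathrm{vol}(M)$; since $\mathrm{vol}(M)$ is finite and positive, $L$ is $1.2$-reflective iff $[W:N] < \infty$.

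Next I would identify the index $[W:N]$ with the order of $\Delta$. The key structural point is that $N$ is a normal subgroup of $W$: it is generated by \emph{all} $1$- and $2$-reflections of $W$, and conjugation by an element of $O(L)$ sends a $k$-reflection to a $k$-reflection, so this generating set is invariant under conjugation. Moreover, every reflection in $W$ is $W$-conjugate to the reflection in some facet of $M$, with the same root norm; hence $N$ is exactly the normal closure in $W$ of the good facet-reflections. Passing to the Coxeter presentation of $W$ (generators $=$ facet reflections $r_i$, relations $(r_ir_j)^{m_{ij}}=1$), the quotient $W/N$ is obtained by adjoining $r_i = 1$ for every good $i$; it is therefore generated by the images of the bad generators, which yields a surjection from the standard parabolic subgroup $\Delta$ onto $W/N$. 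This already gives one direction cleanly: if $\Delta$ is finite then $W/N$ is finite, i.e. $[W:N] < \infty$.

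The main obstacle is the reverse implication, i.e. showing that this surjection $\Delta \to W/N$ is in fact an isomorphism, so that $[W:N] = |\Delta|$. The danger is the standard Coxeter-quotient phenomenon: killing a generator $r_i$ turns a relation $(r_ir_j)^{m_{ij}}=1$ into $r_j^{m_{ij}} = 1$, which collapses $r_j$ to the identity whenever $m_{ij}$ is \emph{odd}; if a bad facet were joined to a good facet by an odd-labelled edge, $W/N$ could be finite while $\Delta$ is infinite, and the lemma would fail. Here the crystallographic nature of the situation saves the day. By the very definition of a root, $2(e_i,e_j)/(e_i,e_i) \in \mathbb{Z}$ for all pairs, so the Cartan integers are integral and two intersecting mirrors can meet only at a dihedral angle $\pi/m$ with $m \in \{2,3,4,6\}$; the single odd value $m=3$ forces $(e_i,e_i) = (e_j,e_j)$, since the two roots are then conjugate and hence of equal norm. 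Consequently a good facet (norm $1$ or $2$) and a bad facet (norm $\ge 3$) can never share an odd edge, so adjoining $r_i = 1$ for the good generators imposes \emph{no} relation among the bad generators and kills none of them: the quotient is precisely the Coxeter group on the bad generators, namely $\Delta$. Hence $[W:N] = |\Delta|$, and combining this with the covolume identity of the first paragraph gives that $L$ is $1.2$-reflective iff $\mathrm{vol}(M) < \infty$ and $|\Delta| < \infty$, that is, iff $L$ is reflective and $\Delta$ is finite, as required.
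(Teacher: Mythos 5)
The paper offers no proof of this lemma at all --- it is simply quoted from Vinberg's paper \cite{vinberg07classhyplat} --- so there is nothing in the text to compare your argument against line by line; judged on its own terms, your proof is correct. The reduction to ``$[O_r(L):O^{(1.2)}_r(L)]<\infty$ iff $\Delta$ is finite'' is sound (you could even skip the volume computation and just use multiplicativity of indices in the chain $O^{(1.2)}_r(L)\le O_r(L)\le O(L)$), the identification of $O^{(1.2)}_r(L)$ with the normal closure in $O_r(L)$ of the norm-$1$ and norm-$2$ facet reflections of $M$ is correctly justified, and --- most importantly --- you isolate and close the one genuine danger in the naive presentation argument: a relation $(r_ir_j)^{m}=1$ with $m$ odd between a killed generator and a surviving one would force $r_j=1$ in the quotient and break the identification of $W/N$ with $\Delta$. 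Your crystallographic exclusion of this is right: for intersecting walls $4\cos^2(\pi/m)=\frac{2(e_i,e_j)}{(e_i,e_i)}\cdot\frac{2(e_i,e_j)}{(e_j,e_j)}$ is an integer, so $m\in\{2,3,4,6\}$, and $m=3$ forces both Cartan integers to equal $-1$ and hence the two root norms to coincide, which is impossible for a wall of norm $1$ or $2$ against a wall of norm $\ge 3$. The proof Vinberg's framework suggests is the geometric twin of yours: the $1$- and $2$-mirrors form an $O_r(L)$-invariant subarrangement, whence $O_r(L)=O^{(1.2)}_r(L)\rtimes\Theta$ with $\Theta$ the stabilizer of the chamber $M^{(1.2)}\supseteq M$ of that subarrangement, and $\Theta$ is generated by the reflections in the walls of $M$ interior to $M^{(1.2)}$, i.e.\ $\Theta=\Delta$. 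Your Tietze computation establishes the same equality $[O_r(L):O^{(1.2)}_r(L)]=|\Delta|$ algebraically, at the price of having to rule out odd good--bad edges explicitly, which you do.
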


If we can construct the fundamental polyhedron of the group $O_r (L)$
for some reflective
lattice $L$, then we can find the faces in the Coxeter diagram
 not corresponding to $1$- and
$2$-reflections. If the group corresponding to the selected faces is finite,
then this lattice
is  $1.2$-reflective,
and if this group $\Delta$ is infinite,
 then this lattice is reflective, but not $1.2$-reflective.

\subsection{Passage to a smaller dimension}

There is an important theorem
that enables us to check the lattice reflectivity or non-reflectivity
via reduction of dimension.

\begin{theorem}\label{bugaenko}
(V.O. Bugaenko, see \cite[Theorem 2.1]{bugaenko92arcrgr-refl})

Let a hyperbolic lattice $L$ be decomposed into
the direct sum of a hyperbolic lattice $L'$ and
an elliptic lattice $M$. Let also $\Lambda$ and $\Lambda'$ be the Lobachevsky spaces
whose models are constructed in the spaces $V
= L \otimes \mathbb{R}$ and $V' = L' \otimes
\mathbb{R}$, respectively. Then the intersection of the fundamental
polyhedron of the group $O_r(L)$ with the subspace $\Lambda'$ of the
space $\Lambda$ is either empty or
is the fundamental polyhedron $P'$ of the group $O_r (L')$.
\end{theorem}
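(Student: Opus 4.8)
The plan is to realize $O_r(L')$ as a subgroup of $O_r(L)$ and to show that the wall inequalities cutting out the fundamental polyhedron $P$ of $O_r(L)$ restrict on $\Lambda'$ to precisely the wall inequalities of $P'$. Write $V = V' \perp V_M$ with $V_M = M \otimes \mathbb{R}$ positive definite, so that $L = L' \oplus M$ is an orthogonal sum and every $e \in L$ splits uniquely as $e = e' + e_M$ with $e' \in L'$, $e_M \in M$. Since $V' \perp V_M$, for every $x \in \Lambda' \subset V'$ we have the basic identity $(x, e) = (x, e')$. This sorts the roots of $L$ into three kinds: (A) $e_M = 0$, i.e. $e = e' \in L'$; (B) $e' = 0$, i.e. $e \in M$; (C) both components nonzero. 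A short check shows the kind-(A) roots of $L$ are exactly the roots of $L'$, and that their reflections generate a subgroup of $O_r(L)$ that preserves $\Lambda'$ and acts on it as $O_r(L')$; kind-(B) reflections fix $\Lambda'$ pointwise (their mirrors contain $V'$), and kind-(C) reflections do not preserve $\Lambda'$.

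If $\Lambda'$ meets $\operatorname{int} P$, fix an interior point $v_0 \in \Lambda' \cap \operatorname{int} P$, generic in $\Lambda'$ (off the countably many traces $\{(\cdot, e')=0\}$ of kind-(C) mirrors), and take $P'$ to be the $O_r(L')$-chamber of $v_0$; if $\Lambda'$ meets $P$ only along a wall, that wall carries a kind-(B) root whose reflection fixes $\Lambda'$, so after passing to the adjacent chamber I may again assume $v_0 \in \operatorname{int}P$, while if $\Lambda'$ misses $P$ altogether we are in the ``empty'' alternative. One inclusion is then immediate: each simple root $a'$ of $L'$ (with respect to $v_0$) is a kind-(A) root of $L$ with $(v_0, a') < 0$, so $\operatorname{int}P$, being connected and disjoint from $H_{a'}$, lies in the half-space $(\cdot, a') \le 0$; intersecting with $\Lambda'$ gives $P \cap \Lambda' \subseteq P'$.

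For the reverse inclusion I would show that $P \cap \Lambda'$ is itself a fundamental domain for $O_r(L')$ acting on $\Lambda'$, whence it coincides with $P'$. Disjointness of interiors is formal: for $w \in O_r(L') \le O_r(L)$ one has $w(P \cap \Lambda') = \Lambda' \cap wP$, and distinct $O_r(L)$-chambers have disjoint interiors. The substantive point is that the $O_r(L')$-translates of $P \cap \Lambda'$ cover $\Lambda'$. Given $x \in \Lambda'$, choose $w \in O_r(L')$ minimizing $\rho(v_0, wx)$; if $wx \notin P$, some simple root $a$ of $L$ satisfies $(wx, a) > 0 > (v_0, a)$. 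A kind-(A) violation is impossible, since reflecting in the corresponding $O_r(L')$-mirror would strictly decrease $\rho(v_0, wx)$; a kind-(B) root gives $(wx, a) = 0$, no violation. So covering can fail only through a kind-(C) simple root whose $\Lambda'$-trace separates $v_0$ from $wx$, i.e. passes through $\operatorname{int}P'$.

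Ruling this out is the main obstacle, because the projection $a' = \pi_{V'}(a)$ of a kind-(C) root need not be a root of $L'$: it satisfies only $2(a', y) \in (a,a)\mathbb{Z}$ for $y \in L'$, with $(a,a) = (a',a') + (a_M, a_M) > (a',a')$, which is weaker than the root condition in $L'$, so its trace is a priori not an $O_r(L')$-mirror and could cut the chamber. The mechanism I would exploit is the strict inequality $\rho(v_0, H_a) < \rho_{\Lambda'}(v_0, \{(\cdot, a')=0\})$, valid precisely because $a_M \neq 0$ tilts $H_a$ toward $v_0$: a kind-(C) wall of $P$ whose trace entered $\operatorname{int}P'$ would, together with the distance-minimality built into Vinberg's algorithm and the arithmetic of $\pi_{V'}(L)$, force a genuine kind-(A) root of $L$ whose mirror lies strictly between $v_0$ and that trace, contradicting that $v_0 \in \operatorname{int}P'$ lies in a single $O_r(L')$-chamber. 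Carrying this comparison out rigorously — showing that no mixed root's trace can invade the $O_r(L')$-chamber — is the technical heart of the argument and is exactly the content of the cited theorem of Bugaenko; the remaining bookkeeping (the empty alternative and the final identification of $P \cap \Lambda'$ with $P'$) is then routine.
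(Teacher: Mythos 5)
The paper does not actually prove this statement --- it is imported verbatim from Bugaenko with a citation --- so your attempt can only be judged on its own merits. Your setup is sound: the splitting of roots into the three kinds, the observation that kind-(A) roots of $L$ are exactly the roots of $L'$, and the easy inclusion $P\cap\Lambda'\subseteq P'$ via connectedness of $\operatorname{int}P$ are all correct, and you correctly isolate the crux: no mirror $H_a$ of a mixed root $a=a'+a_M$ (both components nonzero) may trace a hyperplane through the interior of an $O_r(L')$-chamber. But you then leave precisely that step unproved, and your closing sentence --- that this step ``is exactly the content of the cited theorem'' --- makes the argument circular: it is the theorem you are supposed to be proving. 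The mechanism you gesture at (a distance comparison $\rho(v_0,H_a)<\rho_{\Lambda'}(v_0,H_a\cap\Lambda')$ combined with ``the distance-minimality built into Vinberg's algorithm'') is not a viable route: $P$ here is an arbitrary chamber of $O_r(L)$, not one produced by running Vinberg's algorithm from $v_0$, and the distance inequality by itself manufactures no kind-(A) root between $v_0$ and the offending trace.

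The missing idea is arithmetic, not metric, and your own diagnosis points the wrong way: you write that $a'$ satisfies ``only'' $2(a',y)\in(a,a)\mathbb{Z}$ for $y\in L'$, ``which is weaker than the root condition in $L'$.'' It is in fact stronger. Substitute $y=a'\in L'\subseteq L$ into the root condition for $a$ to get $(a,a)\mid 2(a',a')$; since $(a_M,a_M)>0$ ($M$ is elliptic and $a_M\neq 0$) we have $2(a',a')<2(a,a)$, so either $(a',a')=(a,a)/2$ or $(a',a')\le 0$. In the latter case $a'$ is timelike or isotropic in $V'$ and $H_a\cap\Lambda'=\varnothing$, so there is nothing to rule out. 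In the former case $2(a',y)\in(a,a)\mathbb{Z}=2(a',a')\mathbb{Z}\subseteq(a',a')\mathbb{Z}$ for all $y\in L'$, and the same divisibility passes to the primitive vector underlying $a'$; hence $a'$ is proportional to a genuine root of $L'$ and its trace is already an $O_r(L')$-mirror, so it cannot invade $\operatorname{int}P'$. With this lemma in hand, every trace of an $O_r(L)$-mirror on $\Lambda'$ is empty, all of $\Lambda'$, or an $O_r(L')$-mirror, and your ``routine bookkeeping'' does finish the proof. As it stands, however, the proposal has a genuine gap at its declared technical heart.
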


\begin{corollary}
If the lattice $L$ is reflective, then $L'$ is reflective as well.
\end{corollary}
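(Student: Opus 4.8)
The plan is to recast reflectivity as finite volume of a fundamental polyhedron and then transport that property along the totally geodesic subspace $\Lambda'$ by means of Theorem \ref{bugaenko}. Recall from the introduction that a lattice is reflective exactly when the fundamental polyhedron of its reflection subgroup has finite volume, and from the discussion of Vinberg's algorithm that such a polyhedron has finite volume if and only if it has at least one vertex and every edge issuing from a vertex terminates at another (proper or ideal) vertex. So it suffices to produce, from the finite-volume polyhedron of $O_r(L)$, a finite-volume fundamental polyhedron for $O_r(L')$.

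First I would arrange the hypotheses of Theorem \ref{bugaenko} to hold nondegenerately. Choosing the Vinberg basepoint $v_0$ in the interior of $\Lambda'$ and off every mirror of $O_r(L)$, the fundamental polyhedron $P$ of $O_r(L)$ that contains $v_0$ meets $\Lambda'$ in a neighbourhood of $v_0$, so $P \cap \Lambda' \neq \emptyset$. Theorem \ref{bugaenko} then identifies $P' := P \cap \Lambda'$ with the fundamental polyhedron of $O_r(L')$ in $\Lambda'$, and the corollary reduces to showing that $P'$ has finite volume. Because $L$ is reflective, $P$ has finitely many facets, and every facet of $P'$ is a transverse section $F_i \cap \Lambda'$ of a facet $F_i$ of $P$; hence $P'$ has finitely many facets and (near $v_0$) at least one vertex. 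For the completeness condition I would take an edge $\varepsilon'$ of $P'$, note that it is the section by $\Lambda'$ of a two-dimensional face $\Phi$ of $P$, and follow it outward: it can leave $\overline{P}$ only across $\partial\Phi$, that is, through an edge or a vertex of $P$; by the finite-volume criterion applied to $P$ that edge of $P$ itself ends at a vertex, so $\varepsilon'$ terminates at a point which is a vertex of $P'$. Thus every edge of $P'$ ends in a vertex and $P'$ has finite volume, whence $L'$ is reflective.

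The main obstacle is precisely the completeness step at the ideal vertices: one must exclude the possibility that an edge of $P'$ runs off to a point of $\partial\Lambda'$ which fails to be a genuine vertex of $P'$, for such an end could carry infinite volume even though $P$ has finite volume. This is where the rationality of $\Lambda'$, coming from the orthogonal splitting $L = L' \oplus M$ with $M$ positive definite, is essential: every ideal point of $P$ met by $\Lambda'$ is a cusp of the arithmetic group $O_r(L)$ and therefore carries a full-rank parabolic (Euclidean) subdiagram, and one checks that the mirrors through such a cusp restrict to $\Lambda'$ in a full-rank parabolic configuration, so the corresponding end of $P'$ is again a finite-volume cusp rather than an open end. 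Verifying this cusp-by-cusp inheritance, rather than the mere count of facets, is the delicate part of the argument; granting it, the reflectivity of $L'$ follows as above.
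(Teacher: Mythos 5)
The paper states this corollary without proof, treating it as immediate from Theorem \ref{bugaenko}: once one knows that $P' = P \cap \Lambda'$ is (when non-empty) the fundamental polyhedron of $O_r(L')$, reflectivity of $L'$ is equivalent to $P'$ having finite volume, and this must be deduced from the finiteness of the volume of $P$. Your reduction to this statement is correct, including the choice of base point forcing $P \cap \Lambda' \neq \emptyset$. The difficulty lies in how you establish the finite volume of $P'$, and there you have two genuine gaps. First, the parenthetical claim that $P'$ has ``(near $v_0$) at least one vertex'' does not follow from $P'$ having finitely many facets: a finite-sided convex polyhedron need not have any vertex at all, and the existence of one is part of what must be proved. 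Second, and more seriously, the whole weight of the argument rests on the assertion that the mirrors through each cusp of $P$ met by $\Lambda'$ ``restrict to $\Lambda'$ in a full-rank parabolic configuration''. This is precisely the nontrivial content of the finite-volume claim at the ideal ends, and it is asserted rather than proved; it is also not clear how one would prove it directly, since the mirrors relevant to $P'$ are only those of $O_r(L')$, not all mirrors of $O_r(L)$ through the cusp, and their cocompactness on the horosphere of $\Lambda'$ is essentially equivalent to what you are trying to show.

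The cusp-by-cusp analysis is in any case unnecessary, and a cleaner route closes the gap. A finite-volume Coxeter polyhedron with finitely many facets is the convex hull of finitely many ordinary and ideal vertices, so $\overline{P} \cap \partial\mathbb{L}^n$ is a finite set. Since $\overline{P'} \cap \partial\Lambda' \subseteq \overline{P} \cap \partial\mathbb{L}^n$, the polyhedron $P'$ is a finite-sided convex polyhedron whose closure meets the absolute in finitely many points; any such polyhedron is the convex hull of its finitely many ordinary and ideal vertices and therefore has finite volume (and in particular has at least one vertex, settling the first point as well). This yields the corollary in full generality. Note also that in the paper's actual application the lattices concerned are anisotropic, so $P$ is compact, $P'$ is compact, and the question of ideal ends never arises.
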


Thus, we have a possibility to prove non-reflectivity of some lattices
by using their
decompositions into direct sums of a non-reflective lattice and an elliptic lattice.
To this end, we make  use of some results due to  Nikulin, namely,
if some maximal lattice $L$ of rank $3$
is not in his list of maximal reflective lattices of rank $3$, then
it follows that, for example, the  lattice $L \oplus [1]$
is  also not reflective.

\section{The list of intermediate lattices}\label{sp-prom}

\subsection{Intermediate lattices and their extensions}

%\vskip 1cm

Due to Proposition \ref{per-sluch}, Theorem \ref{anisotr-lemma},
and Theorem \ref{ozenki-T-rasst} we obtain a possibility
to create a programme that picks
from a finite collection of Gram matrices  those matrices that
can correspond to only anisotropic lattices. Our program has been
built in the computer algebra package Sage. As a result, we have
obtained in output the
matrices $G_1$---$G_{27}$, for each of which we obtain the
corresponding maximal extension.

The maximal anisotropic lattices  arising in this process  will be denoted
consecutively by  $L(k)$.

\begin{small} 

\vskip 1cm

(3)

\vskip 1cm

$
G_{1} =
\begin{pmatrix}
1 & 0 & -1 & -1\\
0 & 1 & 0 & 0\\
-1 & 0 & 2 & -3\\
-1 & 0 & -3 & 2
\end{pmatrix} \sim
\begin{pmatrix}
1 & 0 & 0 & 0\\
0 & 1 & 0 & 0\\
0 & 0 & 1 & -4\\
0 & 0 & -4 & 1
\end{pmatrix}, \ \det G_1 = -15, \\
$
%\vskip 0.4cm
hence we see that $L'_1 \simeq [-15] \oplus [1] \oplus [1] \oplus [1]$,
this is a maximal anisotropic lattice, we denote it by $L(1)$.
This lattice appears in \cite{vinberg07classhyplat} as an anisotropic
$2$-reflective hyperbolic lattice of rank $4$.

\vskip 1cm

(4)

\vskip 1cm

$
G_{2} =~
\begin{pmatrix}
1 & 0 & 0 & 0\\
0 & 2 & -1 & 0\\
0 & -1 & 1 & -2\\
0 & 0 & -2 & 1
\end{pmatrix} \sim
\begin{pmatrix}
1 & 0 & 0 & 0\\
0 & 2 & -1 & 0\\
0 & -1 & -3 & 0\\
0 & 0 & 0 & 1
\end{pmatrix}\sim
\begin{pmatrix}
1 & 0 & 0 & 0\\
0 & 2 & -3 & 0\\
0 & -3 & 1 & 0\\
0 & 0 & 0 & 1
\end{pmatrix}\sim
\begin{pmatrix}
1 & 0 & 0 & 0\\
0 & -7 & 0 & 0\\
0 & 0 & 1 & 0\\
0 & 0 & 0 & 1
\end{pmatrix},  ~\det G_2 =~-7,
$
hence $L'_2 \simeq [-7] \oplus [1] \oplus [1] \oplus [1]:=L(2)$,
this is a maximal anisotropic lattice,
which also
appears in \cite{vinberg07classhyplat} as an anisotropic
$2$-reflective hyperbolic lattice of rank $4$.

\vskip 1cm

(5)

\vskip 1cm

$
G_{3} =
\begin{pmatrix}
1 & 0 & 0 & 0\\
0 & 2 & -1 & -1\\
0 & -1 & 2 & -3\\
0 & -1 & -3 & 1
\end{pmatrix} \sim
\begin{pmatrix}
1 & 0 & 0 & 0\\
0 & 1 & -4 & 0\\
0 & -4 & -7 & 0\\
0 & 0 & 0 & 1
\end{pmatrix}\sim
\begin{pmatrix}
1 & 0 & 0 & 0\\
0 & 1 & 0 & 0\\
0 & 0 & -23 & 0\\
0 & 0 & 0 & 1
\end{pmatrix}, \ \det G_3 = -23,
$
\vskip 0.4cm
whence we obtain that $L'_3 \simeq [-23] \oplus [1] \oplus [1] \oplus [1]:
= L(3);$
\vskip 0.4cm
$
G_{4} =
\begin{pmatrix}
1 & 0 & -1 & 0\\
0 & 2 & 0 & -1\\
-1 & 0 & 2 & -2\\
0 & -1 & -2 & 1
\end{pmatrix} \sim
\begin{pmatrix}
1 & 0 & 0 & 0\\
0 & 1 & -2 & 0\\
0 & -2 & -3 & 0\\
0 & 0 & 0 & 1
\end{pmatrix}, \quad
L'_4 \simeq L(2);$

\vskip 0.4cm
$
G_{5} =
\begin{pmatrix}
1 & 0 & -1 & 0\\
0 & 2 & 0 & -1\\
-1 & 0 & 2 & -4\\
0 & -1 & -4& 1
\end{pmatrix} \sim
\begin{pmatrix}
1 & 0 & -1 & 0\\
0 & 1 & -4 & 0\\
-1 & -4 & -14 & 0\\
0 & 0 & 0 & 1
\end{pmatrix}
\sim
\begin{pmatrix}
1 & 0 & 0 & 0\\
0 & 1 & 0 & 0\\
0 & 0 & -31 & 0\\
0 & 0 & 0 & 1
\end{pmatrix},
$
\vskip 0.4cm
hence
$L'_5 \simeq [-31] \oplus [1] \oplus [1] \oplus [1] := L(4);$

\vskip 0.4cm
$
G_{6} =
\begin{pmatrix}
1 & 0 & -1 & 0\\
0 & 2 & -1 & 0\\
-1 & -1 & 2 & -2\\
0 &0 & -2 & 1
\end{pmatrix} \sim
\begin{pmatrix}
1 & 0 & 0 & 0\\
0 & 2 & -1 & 0\\
0 & -1 & -3 & 0\\
0 & 0 & 0 & 1
\end{pmatrix}, \ \det G_6 = -7, \ L'_6 \simeq L(2);
$

\vskip 0.4cm
$
G_{7} =
\begin{pmatrix}
1 & 0 & -1 & 0\\
0 & 2 & -1 & 0\\
-1 & -1 & 2 & -4\\
0 & 0 & -4 & 1
\end{pmatrix} \sim
\begin{pmatrix}
1 & 0 & 0 & 0\\
0 & 2 & -1 & 0\\
0 & -1 & -15 & 0\\
0 & 0 & 0 & 1
\end{pmatrix}, \ \det G_7 = -31.$
\vskip 0.4cm

Let us prove that $\ L'_7 \simeq L(4) = [-31] \oplus [1] \oplus [1] \oplus [1]$.
For this we use the Hasse principe,
which says that it suffices to verify the equivalence
of these lattices  for all $p$-adic completions.
It is clear that if $p \not = 31$, then these lattices
are unimodular and isomorphic.
It remains to consider the case $p = 31$.

We observe that $8^2  \equiv 2 \pmod{31}$,
hence
$$
G_7 \Big(e_1, \dfrac{e_2}{8}, e_3, e_4\Big) =
\begin{pmatrix}
1 & 0 & 0 & 0\\
0 & 1 & -\dfrac{1}{8} & 0\\
 0 & -\dfrac{1}{8} & -15 & 0 \\
 0 & 0 & 0 & 1\\
\end{pmatrix} \sim
\begin{pmatrix}
1 & 0 & 0 & 0\\
0 & 1 & 0 & 0 \\
0 & 0 & -15-\dfrac{1}{2} & 0 \\
0 & 0 & 0 &1 \\
\end{pmatrix},
$$
whence we obtain $\ (L'_7)_{31} \simeq (L(4))_{31}$.
Therefore, the lattice  $L'_7$ is isomorphic to the lattice $L(4)$.

\vskip 0.4cm
$
G_{8} =
\begin{pmatrix}
1 & -1 & 0 & 0\\
-1 & 2 & 0 & 0\\
0 & 0 & 2 & -3\\
0 & 0 & -3 & 1
\end{pmatrix} \sim
\begin{pmatrix}
1 & 0 & 0 & 0\\
0 & 1 & 0 & 0\\
0 & 0 & -7 & 0\\
0 & 0 & 0 & 1
\end{pmatrix}, \ \det G_8 = -7, \ L'_8 \simeq L(2);
$

\vskip 0.4cm
$
G_{9} =
\begin{pmatrix}
1 & -1 & 0 & 0\\
-1 & 2 & -1 & 0\\
0 & -1 & 2 & -4\\
0 & 0 & -4 & 1
\end{pmatrix} \sim
\begin{pmatrix}
1 & 0 & 0 & 0\\
0 & 1 & 0 & 0\\
0 & 0 & 1 & 0\\
0 & 0 & 0 & -15
\end{pmatrix}, \ \det G_{9} = -15, \ L'_{9} \simeq L(1);
$

\vskip 1cm

(6)

\vskip 1cm

$
G_{10} =
\begin{pmatrix}
1 & 0 & -1 & 0\\
0 & 2 & 0 & -1\\
-1 & 0 & 2 & -3\\
0 & -1 & -3 & 2
\end{pmatrix} \sim
\begin{pmatrix}
1 & 0 & 0 & 0\\
0 & 2 & 0 & -1\\
0 & 0 & 1 & 0\\
0 & -1 & 0 & -7
\end{pmatrix}\sim
\begin{pmatrix}
1 & 0 & 0 & 0\\
0 & -3 & 0 & 0\\
0 & 0 & 1 & 0\\
0 & 0 & 0 & 5
\end{pmatrix},
$
\vskip 0.4cm
whence it follows that
$\det G_{10} = -15, \ L'_{10} \simeq [-3] \oplus [5] \oplus [1]
\oplus [1]:=L(5);$
\vskip 0.4cm

$
G_{11} =
\begin{pmatrix}
1 & 0 & -1 & 0\\
0 & 2 & -1 & -1\\
-1 & -1 & 2 & -3\\
0 & -1 & -3 & 2
\end{pmatrix} \sim
\begin{pmatrix}
1 & 0 & 0 & 0\\
0 & 1 & 0 & 0\\
0 & 0 & 1 & -3\\
0 & 0 & -3 & -14
\end{pmatrix}, \ \det G_{11} = -23, \ L'_{11} \simeq L(3);
$

$
G_{12} =
\begin{pmatrix}
1 & 0 & -1 & 0\\
0 & 2 & -1 & -1\\
-1 & -1 & 2 & -4\\
0 & -1 & -4 & 2
\end{pmatrix} \sim
\begin{pmatrix}
1 & 0 & 0 & 0\\
0 & 1 & 0 & 0\\
0 & 0 & 1 & -4\\
0 & 0 & -4 & -23
\end{pmatrix}, \ \det G_{12} = -39,
$
\vskip 0.4cm
whence we obtain that $L'_{12} \simeq [-39] \oplus [1] \oplus [1]
\oplus [1]:= L(6);$
\vskip 0.4cm

$
G_{13} =
\begin{pmatrix}
1 & 0 & -1 & 0\\
0 & 2 & -1 & -1\\
-1 & -1 & 2 & -7\\
0 & -1 & -7 & 2
\end{pmatrix} \sim
\begin{pmatrix}
1 & 0 & 0 & 0\\
0 & 1 & 0 & 0\\
0 & 0 & 1 & -7\\
0 & 0 & -7 & -62
\end{pmatrix}, \ \det G_{13} = -111,
$
\vskip 0.4cm
whence we obtain that $L'_{13} \simeq [-111] \oplus [1] \oplus [1]
\oplus [1]:= L(7);$
\vskip 0.4cm

$
G_{14} =
\begin{pmatrix}
1 & 0 & -1 & -1\\
0 & 2 & -1 & 0\\
-1 & -1 & 2 & -1\\
-1 & 0 & -1 & 2
\end{pmatrix} \sim
\begin{pmatrix}
1 & 0 & 0 & 0\\
0 & 2 & -1 & 0\\
0 & -1 & -3 & 0\\
0 & 0 & 0 & 1
\end{pmatrix}, \ \det G_{14} = -7, \ L'_{14} \simeq L(2);
$

$
G_{15} =
\begin{pmatrix}
1 & 0 & -1 & -1\\
0 & 2 & -1 & 0\\
-1 & -1 & 2 & -3\\
-1 & 0 & -3 & 2
\end{pmatrix} \sim
\begin{pmatrix}
1 & 0 & 0 & 0\\
0 & 2 & -1 & 0\\
0 & -1 & -15 & 0\\
0 & 0 & 0 & 1
\end{pmatrix}, \ \det G_{15} = -31, \ L'_{15} \simeq L(4);
$

$
G_{16} =
\begin{pmatrix}
1 & 0 & -1 & -1\\
0 & 2 & -1 & 0\\
-1 & -1 & 2 & -5\\
-1 & 0 & -5 & 2
\end{pmatrix} \sim
\begin{pmatrix}
1 & 0 & 0 & 0\\
0 & 2 & -1 & 0\\
0 & -1 & -35 & 0\\
0 & 0 & 0 & 1
\end{pmatrix}, \ \det G_{16} = -71,$
\vskip 0.4cm
here we also observe that $12^2 = 144 \equiv 2 \pmod{71$}, whence
$\ L'_{16} \simeq [-71] \oplus [1] \oplus [1] \oplus [1]:= L(8);$

$
G_{17} =
\begin{pmatrix}
1 & 0 & -1 & -1\\
0 & 2 & -1 & -1\\
-1 & -1 & 2 & -4\\
-1 & -1 & -4 & 2
\end{pmatrix} \sim
\begin{pmatrix}
1 & 0 & 0 & 0\\
0 & 1 & -6 & 0\\
0 & -6 & -24 & 0\\
0 & 0 & 0 & 1
\end{pmatrix}, \ \det G_{17} = -60,$
\vskip 0.4cm
$L'_{17} \simeq [-60] \oplus [1] \oplus [1] \oplus [1],$ and the unique
extension of this lattice is $L(2)$.
\vskip 0.4cm

$
G_{18} =
\begin{pmatrix}
1 & -1 & 0 & 0\\
-1 & 2 & -1 & 0\\
0 & -1 & 2 & -3\\
0 & 0 & -3 & 2
\end{pmatrix} \sim
\begin{pmatrix}
1 & 0 & 0 & 0\\
0 & 1 & 0 & 0\\
0 & 0 & 1 & -3\\
0 & 0 & -3 & 2
\end{pmatrix}, \ \det G_{18} = -7, \ L'_{18} \simeq L(2);
$

$
G_{19} =
\begin{pmatrix}
1 & -1 & 0 & 0\\
-1 & 2 & -1 & 0\\
0 & -1 & 2 & -5\\
0 & 0 & -5 & 2
\end{pmatrix} \sim
\begin{pmatrix}
1 & 0 & 0 & 0\\
0 & 1 & 0 & 0\\
0 & 0 & 1 & -5\\
0 & 0 & -5 & 2
\end{pmatrix}, \ \det G_{19} = -23, \ L'_{19} \simeq L(3);
$

$
G_{20} =
\begin{pmatrix}
1 & -1 & 0 & 0\\
-1 & 2 & -1 & 0\\
0 & -1 & 2 & -7\\
0 & 0 & -7 & 2
\end{pmatrix} \sim
\begin{pmatrix}
1 & 0 & 0 & 0\\
0 & 1 & 0 & 0\\
0 & 0 & 1 & -7\\
0 & 0 & -7 & 2
\end{pmatrix}, \ \det G_{20} = -47, $
\vskip 0.4cm
whence it follows that
$\ L'_{20} \simeq [-47] \oplus [1] \oplus [1] \oplus [1] := L(9)$;

\vskip 0.4cm
$
G_{21} =
\begin{pmatrix}
1 & -1 & 0 & 0\\
-1 & 2 & -1 & -1\\
0 & -1 & 2 & -3\\
0 & -1 & -3 & 2
\end{pmatrix} \sim
\begin{pmatrix}
1 & 0 & 0 & 0\\
0 & 1 & 0 & 0\\
0 & 0 & 1 & -4\\
0 & 0 & -4 & 1
\end{pmatrix}, \ \det G_{21} = -15, \ L'_{21} \simeq L(1)$

\vskip 1cm

(7)

\vskip 1cm

$
G_{22} =
\begin{pmatrix}
2 & -1 & -1 & 0\\
-1 & 2 & 0 & 0\\
-1 & 0 & 1 & -4\\
0 & 0 & -4 & 1
\end{pmatrix} \sim
\begin{pmatrix}
1 & -1 & 0 & -4\\
-1 & 2 & 0 & 0\\
0 & 0 & 1 & -4\\
-4 & 0 & -4 & 1
\end{pmatrix}\sim
\begin{pmatrix}
1 & 0 & 0 & 0\\
0 & 1 & 0 & -4\\
0 & 0 & 1 & -4\\
0 & -4 & -4 & -15
\end{pmatrix}\sim
\begin{pmatrix}
1 & 0 & 0 & 0\\
0 & 1 & 0 & 0\\
0 & 0 & 1 & -4\\
0 & 0 & -4 & -31
\end{pmatrix},
$
\vskip 0.4cm
whence it follows that
$\det G_{22} = -47, \ L'_{22} \simeq L(9);$

\vskip 1cm

(9)

\vskip 1cm

$
G_{23} =
\begin{pmatrix}
2 & 0 & -1 & 0\\
0 & 2 & -1 & 0\\
-1 & -1 & 2 & -3\\
0 & 0 & -3 & 2
\end{pmatrix}, \ \det G_{23} = -28,\\
$
%\vskip 0.4 cm
and the lattice $L'_{23}$ has the unique extension  of index  $2$ that
can be obtained by considering this lattice in the basis
$$\Big\{\dfrac{e_1 + e_2}{2},  \dfrac{e_1 - e_2}{2},  e_3, e_4\Big\}.$$
In this basis, the matrix $G_{23}$ has the form

$
 \begin{pmatrix}
1 & 0 & -1 & 0\\
0 & 1 & 0 & 0\\
-1 & 0 & 2 & -3\\
0 & 0 & -3 & 2
\end{pmatrix}\sim
 \begin{pmatrix}
1 & 0 & 0 & 0\\
0 & 1 & 0 & 0\\
0 & 0 & 1 & 0\\
0 & 0 & 0 & -7
\end{pmatrix},
$

whence it follows that the unique extension in this case is the lattice $L(2)$.

\vskip 0.4cm

$
G_{24} =
\begin{pmatrix}
2 & 0 & -1 & -1\\
0 & 2 & 0 & 0\\
-1 & 0 & 2 & -4\\
-1 & 0 & -4 & 2
\end{pmatrix}, \ \det G_{24} = -72, \\
$
%\vskip 0.4cm
the invariant factors of the lattice $L'_{24}$ are $(1, 1, 6, 12)$. In this case,
considering the basis
$$\Big\{e_1, e_2, \dfrac{e_3 + e_4}{2},  \dfrac{e_3 - e_4}{2}\Big\},$$
 we obtain the unique extension of index $2$ that equals
$$[-1] \oplus [3] \oplus [3] \oplus [2]:= L(10).$$
\vskip 0.4cm

$
G_{25} =
\begin{pmatrix}
2 & 0 & -1 & -1\\
0 & 2 & -1 & -1\\
-1 & -1 & 2 & -3\\
-1 & -1 & -3 & 2
\end{pmatrix} , \ \det G_{25} = -60,
$
\vskip 0.4cm
in this case we also use the  basis
$\Big\{\dfrac{e_1 + e_2}{2},  \dfrac{e_1 - e_2}{2},  e_3, e_4\Big\}$ and
obtain the unique extension  of index  $2$ that equals $L(1)$.
\vskip 0.4cm

$
G_{26} =
\begin{pmatrix}
2 & -1 & 0 & 0\\
-1 & 2 & -1 & -1\\
0 & -1 & 2 & -4\\
0 & -1 & -4 & 2
\end{pmatrix} , \ \det G_{26} = -60,
$
\vskip 0.4cm
using the basis
$$\Big\{\dfrac{e_1 + e_4}{2},  e_2, e_3,  \dfrac{e_1 - e_4}{2}\Big\}$$
we obtain the unique (up to an isomorphism) extension of index  $2$ that
equals $L(1)$.
\vskip 0.4cm

$
G_{27} =
\begin{pmatrix}
2 & -1 & 0 & -1\\
-1 & 2 & -1 & 0\\
0 & -1 & 2 & -3\\
-1 & 0 & -3 & 2
\end{pmatrix}, \ \det G_{27} = -28,
$
in this case we consider the basis
$$
\Big\{\dfrac{e_1 + e_2 + e_3 + e_4}{2},  \dfrac{e_1 + e_2 - e_3
- e_4}{2},  e_3, e_4\Big\},
$$
which gives the unique extension  equal to $L(2)$.
\end{small}

\subsection{The list of maximal anisotropic lattices-pretendents}\label{sp-prom-pretend}

\begin{small}
In this general list we collect all intermediate maximal anisotropic lattices.
\vskip 0.5 cm

\begin{tabular}{|c|c|c|c|c|c|}
\hline
L(k) & $L$ & Invariant factors & Discriminant\\
\hline
$L(1)$ & $[-15] \oplus [1] \oplus [1] \oplus [1]$
& $(1, 1, 1, 15)$ & $-15$ \\
\hline
$L(2)$ & $[-7] \oplus [1] \oplus [1] \oplus [1]$ & $(1, 1, 1, 7)$ & $-7$ \\
\hline
$L(3)$ & $[-23] \oplus [1] \oplus [1] \oplus [1]$ & $(1, 1, 1, 23)$ & $-23$ \\
\hline
$L(4)$ & $[-31] \oplus [1] \oplus [1] \oplus [1]$ & $(1, 1, 1, 31)$ & $-31$ \\
\hline
$L(5)$ & $[-3] \oplus [5] \oplus [1] \oplus [1]$ & $(1, 1, 1, 15)$ & $-15$ \\
\hline
$L(6)$ & $[-39] \oplus [1] \oplus [1] \oplus [1]$ & $(1, 1, 1, 39)$ & $-39$ \\
\hline
$L(7)$ & $[-111] \oplus [1] \oplus [1] \oplus [1]$
& $(1, 1, 1, 111)$ & $-111$ \\
\hline
$L(8)$ & $[-71] \oplus [1] \oplus [1] \oplus [1]$ & $(1, 1, 1, 71)$ & $-71$ \\
\hline
$L(9)$ & $[-47] \oplus [1] \oplus [1] \oplus [1]$ & $(1, 1, 1, 47)$ & $-47$ \\
\hline
$L(10)$ & $[-1] \oplus [3] \oplus [3] \oplus [2]$ & $(1, 1, 3, 6)$ & $-18$ \\
\hline

\end{tabular}
\end{small}

\section{Verification of $1.2$-reflectivity}\label{issled-refl}

It remains to test on reflectivity a very small number of lattices.
The lattices $L(1)$ and $L(2)$ are maximal $2$-reflective
anisotropic lattices, i.e., they are also $1.2$-reflective.
The $2$-reflectivity of this lattices
was proved in \cite{vinberg07classhyplat}.

Non-reflectivity of the lattice $L(3)$ was proved in  \cite{mark13primelat},
non-reflectivity of the lattice $L(6)$ was proved in the dissertation
\cite{mcleod13arithhrgdisser}, where one can also find the proof
of the fact that the lattices
$L(4)$, $L(5)$, $L(7)-L(9)$ are non-reflective.
The non-reflectivity of these five lattices
follows from the Bugaenko theorem (\ref{bugaenko}) along with the
absence of analogous lattices of lower dimension in Nikulin's list.

\begin{proposition}
The lattice $L(5) = [-3] \oplus [5] \oplus [1] \oplus [1]$ is
reflective, but not $1.2$-reflective.
\end{proposition}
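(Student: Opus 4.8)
The plan is to combine the two verification tools assembled in Section~\ref{metod-issled-refl}: first Vinberg's algorithm, to produce the fundamental polyhedron of $O_r(L(5))$ and establish reflectivity, and then the method of ``bad'' reflections, to rule out $1.2$-reflectivity. Fix the orthogonal basis $f_1,f_2,f_3,f_4$ of $L(5)$ with $(f_1,f_1)=-3$, $(f_2,f_2)=5$, $(f_3,f_3)=(f_4,f_4)=1$, so that $f_1$ spans the negative direction.

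First I would run Vinberg's algorithm with a base point $v_0$ along $f_1$. The stabilizer $(\Gamma_{v_0})_r$ is then the reflection group of the Euclidean complement $\langle f_1\rangle^{\perp}\simeq [5]\oplus[1]\oplus[1]$, which is finite, and its walls furnish the initial mirrors $H_1,\dots,H_m$. I would iterate the algorithm, at each step selecting a root $a_k$ with $(a_k,v_0)<0$, $(a_k,a_i)\le 0$ for all $i<k$, and $\rho(v_0,H_k)$ minimal, until the mirrors found bound a polyhedron $M$. By the finiteness criterion recalled after Vinberg's theorem, $M$ has finite volume precisely when its Coxeter diagram has at least one vertex and every edge issuing from a vertex terminates in a vertex; checking this on the explicit diagram shows that $L(5)$ is reflective.

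Next I would isolate the ``bad'' walls. By Proposition~24 of~\cite{vinberg84abskr}, since the largest invariant factor of $L(5)$ is $15$, only $k$-roots with $k\mid 30$ can occur, so the bad reflections are exactly those with $k\in\{3,5,6,10,15,30\}$; in particular $f_2$ is a $5$-root, so $R_{f_2}$ is bad. Reading the Gram matrix of the normals output by the algorithm, I would record precisely which faces of $M$ are $k$-reflections with $k\ge 3$, and let $\Delta$ denote the subgroup of $O_r(L(5))$ generated by the reflections in those faces.

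Finally, by the lemma of the subsection on ``bad'' reflections, $L(5)$ is $1.2$-reflective if and only if it is reflective and $\Delta$ is finite. Having established reflectivity, it remains to show that $\Delta$ is infinite, and the plan here is to analyze the Coxeter subdiagram spanned by the bad mirrors: it suffices to exhibit two bad mirrors that are parallel or divergent, i.e. whose normalized inner product is $\le -1$, since the two associated reflections already generate an infinite group, or more generally to recognize that subdiagram as a non-elliptic (parabolic or hyperbolic) Coxeter diagram. I expect the main obstacle to be exactly this final step: carrying Vinberg's algorithm through to a closed face list and then certifying that $\Delta$ is infinite rather than merely a finite dihedral group. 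Once that is verified, the lemma gives that $L(5)$ is reflective but not $1.2$-reflective, as claimed.
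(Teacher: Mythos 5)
Your strategy coincides exactly with the paper's: run Vinberg's algorithm from a base point on the $[-3]$ axis to get a finite-volume fundamental polyhedron (reflectivity), then apply the lemma on ``bad'' reflections and show the bad subgroup $\Delta$ is infinite by finding a non-elliptic subdiagram among the bad mirrors. The framing is sound, including the observation that two divergent bad mirrors already force $\Delta$ to be infinite --- that is precisely how the paper concludes.

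The gap is that you never carry out the computation, and for a statement of this kind the computation \emph{is} the proof. Nothing in your write-up certifies that the algorithm terminates, that the resulting polyhedron has finite volume, or that the bad mirrors actually contain a divergent pair; you explicitly defer all three. The paper's proof consists of producing the seven explicit roots $a_1=(0;0,0,-1)$, $a_2=(0;0,-1,1)$, $a_3=(0;-1,0,0)$, $a_4=(1;0,3,0)$, $a_5=(1;1,0,0)$, $a_6=(2;1,2,2)$, $a_7=(10;6,10,5)$ of norms $1,2,5,6,2,1,5$, checking that their Coxeter diagram bounds a compact polyhedron, and observing that the three bad roots $a_3$, $a_4$, $a_7$ span a subdiagram containing a dotted edge, whence $\Delta$ is infinite. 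Two further points would have eased the search had you attempted it: the paper sharpens the a priori constraint $k\mid 30$ to $k\in\{1,2,5,6\}$ via congruence conditions (your list $\{3,5,6,10,15,30\}$ of possible bad norms is therefore too generous), and the termination of the algorithm at the seventh root is not obvious in advance --- the diagram fails the finite-volume criterion after four, five and six roots, so one must actually push through the increasingly constrained root searches to reach $a_7$.
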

\begin{proof}
We first observe that   the scalar
product corresponding to this lattice has the following form:
$$
(x, y) = - 3 x_0 y_0 + 5 x_1 y_1 + x_2 y_2 + x_3 y_3,
$$
where $x = (x_0, x_1, x_2, x_3), \ y = (y_0, y_1, y_2, y_3)
\in \mathbb{R}^{3,1}$.

Following Vinberg's algorithm, we pick the basic point
$v_0 = (1; 0, 0, 0)$ and normals for the sides of the fundamental polyhedral cone:

$a_1 = (0; 0, 0, -1),  \ (a_1, a_1) = 1$;

$a_2 = (0; 0, -1, 1), \ (a_2, a_2) = 2$;

$a_3 = (0; -1, 0, 0), \ (a_3, a_3) = 5$.

It is worth noting here that when choosing a $k$-root, we should take into account
that $k$ must divide the double largest
invariant factor,  i.e., $k$ must be
a divisor of $30$. Every next $k$-root $a = (x_0, x_1, x_2, x_3)$
is determined by the conditions
$$
10 x_1 \equiv 2x_2 \equiv 2x_3 \equiv 0 \pmod k, \quad x_0 > 0,
\quad (a, a_j) \le 0, \quad
\dfrac{|(a, v_0)|}{\sqrt{(a, a)}} = \dfrac{3x_0}{\sqrt{k}}=
\min.
$$
We observe that if $k$ is divisible by $3$, then
it follows from the  conditions indicated above
that the number $k/3$ gives the remainder $2$ when divided by $3$,
and if $k = 5t$, then
$t$ is a square by modulo~$5$. This yields that
$k$ can equal only $1, 2, 5, 6$.

Clearly, the minimal value of $\dfrac{|(a, v_0)|}{\sqrt{(a, a)}}$
is achieved when the   length of the root we are picking is maximal and
the value $x_0$ is minimal. We find the fourth root

$a_4 = (1; 0, 3, 0), \ (a_4, a_4) = 6$.

The Coxeter  diagram for the first four roots does not determine
a polyhedron of a finite volume.

The fifth root must  now satisfy the conditions
$(a_5, a_j) \le 0$ whenever  $j \le 4$, hence
$$
x_0 \ge x_2 \ge x_3 \ge 0, \quad x_0 \ge 1.
$$

If $k = 5$, then it is not difficult to see that $x_0$ must be divisible by $5$.
It is also clear
that $\dfrac{1}{\sqrt{2}} < \dfrac{2}{\sqrt{6}}$,
so the nearest fifth root is the root

$a_5 = (1; 1, 0, 0), \ (a_5, a_5) = 2$.

It is clear that the Coxeter diagram at this step still does not determine
a polyhedron of a finite volume.

The sixth root must satisfy the conditions
$$
3x_0 \ge 5x_1, \quad x_0 \ge x_2 \ge x_3 \ge 0, \quad x_0 \ge 1.
$$

A quick analysis of these cases shows that

$a_6 = (2; 1, 2, 2), \ (a_6, a_6) = 1$.

The Coxeter diagram of the first six roots also does not determine a polyhedron
of a finite volume, so we should find the next root.

The seventh root must satisfy the  additional condition
$$
-6x_0 + 5 x_1 + 2x_2 + 2 x_3 \le 0.
$$

At this step the analysis of various cases becomes rather lengthy and yields
the seventh root

$a_7 = (10; 6, 10, 5), \ (a_7, a_7) = 5$.

For the obtained seven roots the Coxeter diagram has the following form:

\begin{center}

{\scshape Picture 1.}

\includegraphics{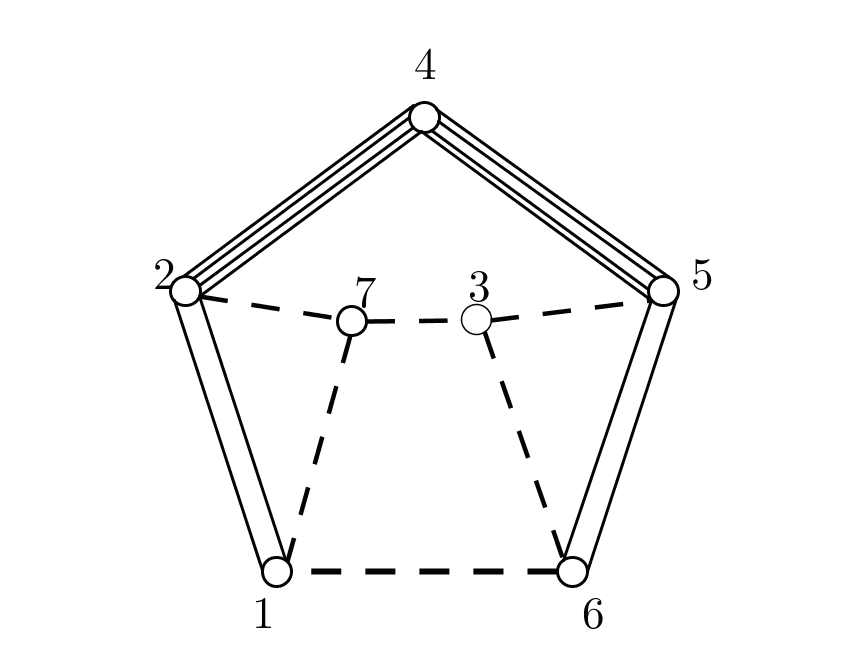}

\end{center}

It determines a bounded three-dimensional  Coxeter polyhedron.
We observe that the roots $a_3$, $a_4$, $a_7$ determine the group
generated by ``bad'' reflections. This group is infinite, since
the corresponding subdiagram contains a dotted edge. Therefore, the lattice $L(5)$ is
reflective, but not $1.2$-reflective.
\end{proof}

\begin{proposition}
The lattice $L(10) = [-1] \oplus [3] \oplus [2] \oplus [2]$ is
reflective, but not $1.2$-reflective.
\end{proposition}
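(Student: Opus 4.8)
The plan is to follow exactly the strategy used above for $L(5)$: run Vinberg's algorithm to build the fundamental polyhedron of the group $O_r(L(10))$ explicitly, which proves reflectivity, and then apply the method of ``bad'' reflections to conclude that $L(10)$ is not $1.2$-reflective. Writing the scalar product as
$$
(x,y) = -x_0 y_0 + 3 x_1 y_1 + 2 x_2 y_2 + 2 x_3 y_3,
$$
I would take the basic point $v_0 = (1; 0, 0, 0)$, for which $(v_0, v_0) = -1$, and start by listing the outer normals $a_1, a_2, a_3$ of the fundamental polyhedral cone of the stabilizer $(\Gamma_{v_0})_r$, chosen among the coordinate mirrors as in the $L(5)$ computation.

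Before iterating I would record the constraints on the roots. By the criterion that a lattice has $k$-roots only when $k$ divides twice its largest invariant factor (here $2 \cdot 6 = 12$), the admissible values of $k = (e,e)$ are a priori $k \in \{1, 2, 3, 4, 6, 12\}$; the integrality condition $2(e,x) \in k\mathbb{Z}$ together with quadratic-residue restrictions modulo the prime divisors of $k$ (playing the role of the conditions ``$k/3 \equiv 2 \pmod 3$'' and the square condition modulo $5$ in the $L(5)$ argument) should cut this list down considerably. At each step the next root $a = (x_0; x_1, x_2, x_3)$ is then determined by $(a, v_0) < 0$, the acuteness conditions $(a, a_j) \le 0$ for all previously found $a_j$, and minimality of $|(a, v_0)|/\sqrt{(a,a)}$.

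I would then carry out the iteration, producing roots $a_4, a_5, \ldots$ one at a time and redrawing the Coxeter diagram after each, until Vinberg's finiteness criterion is met: since $L(10)$ is anisotropic the polyhedron has no vertices at infinity, so it is of finite volume (indeed bounded) exactly when it has a vertex and every edge issuing from a vertex ends in another vertex. This establishes that $L(10)$ is reflective. For the second half I would single out the ``bad'' reflections, namely those attached to roots with $k = (e,e) \ge 3$, and inspect the subdiagram $\Delta$ they span. By the lemma on ``bad'' reflections, $L(10)$ is $1.2$-reflective if and only if $\Delta$ is finite; I expect $\Delta$ to contain a dotted (divergent) edge and therefore to be infinite, which gives the assertion.

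The main obstacle is, as always with this method, the termination of Vinberg's algorithm: the number of roots required is not known in advance, and --- as the appearance of the long root $a_7 = (10; 6, 10, 5)$ in the $L(5)$ case already indicates --- the case analysis for the later, longer roots grows rapidly and is the step most prone to error. Verifying the finite-volume condition on the final diagram and confirming that the subdiagram $\Delta$ of bad reflections is genuinely infinite (rather than merely large) are the points demanding the most care.
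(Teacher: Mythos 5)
Your plan is exactly the paper's proof: Vinberg's algorithm from the basic point $v_0=(1;0,0,0)$ followed by the method of ``bad'' reflections, so there is no difference of approach to discuss. The gap is that for a statement of this kind the proof \emph{is} the computation, and you have only described the computation without performing it: you do not determine the admissible root lengths, produce the actual roots, exhibit the final Coxeter diagram, or identify the infinite subgroup of bad reflections. For the record, the paper cuts the list $\{1,2,3,4,6,12\}$ down to $k\in\{1,2,3,6\}$ (a root of square divisible by $4$ could be halved) and adds the congruence observation that a root of square divisible by $3$ has $x_0\equiv x_3\equiv 0\pmod 3$; the algorithm then terminates after only three further roots, $a_4=(1;1,0,0)$ with $(a_4,a_4)=2$, $a_5=(1;0,0,1)$ with $(a_5,a_5)=1$, and $a_6=(6;2,2,3)$ with $(a_6,a_6)=6$, giving a bounded polyhedron (a tetrahedron with two cropped vertices); the bad roots are $a_2$, $a_3$, $a_6$ (norms $3$, $6$, $6$), and their subdiagram contains a dotted edge, so $\Delta$ is infinite. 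Your worry about a long iteration modeled on the $a_7=(10;6,10,5)$ step of the $L(5)$ case does not materialize here, but until the six roots and the two diagram checks (finite volume, infinite $\Delta$) are actually written down and verified, the proposition is not proved.
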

\begin{proof}
The corresponding scalar
product has the following form:
$$
(x, y) = - x_0 y_0 + 3 x_1 y_1 + 2x_2 y_2 + 2x_3 y_3,
$$
where $x = (x_0, x_1, x_2, x_3), \ y = (y_0, y_1, y_2, y_3)
\in \mathbb{R}^{3,1}$.

Following Vinberg's algorithm, we pick the basic point
$v_0 = (1; 0, 0, 0)$ and normals for the sides of the fundamental polyhedral cone:

$a_1 = (0; 0, 0, -1),  \ (a_1, a_1) = 2$;

$a_2 = (0; 0, -1, 0), \ (a_2, a_2) = 3$;

$a_3 = (0; -1, 2, 0), \ (a_3, a_3) = 6$.

We observe that this lattice can have $k$-roots only for
$k = 1, 2, 3, 6$. This can be easily verified, since $k$
must be a divisor of $12$, and also
$k$ is not divisible by $4$, because otherwise
one could reduce this root by $2$.

Further, if some root $a = (x_0, x_1, x_2, x_3)$ has
the square divisible by $3$, then it is readily seen that
$x_0 \equiv x_3 \equiv 0 \pmod 3.$

Due to these conditions, it is easy to find the forth and the fifth roots:

$a_4 = (1; 1, 0, 0), \ (a_4, a_4) = 2$,

$a_5 = (1; 0, 0, 1), \ (a_5, a_5) = 1$.

The Coxeter diagram does not determine
a polyhedron of a finite volume, so we find the sixth
root

$a_6 = (6; 2, 2, 3), \ (a_6, a_6) = 6$.

For the obtained six roots the Coxeter diagram has the following form:

\begin{center}

{\scshape Picture 2.}

\includegraphics{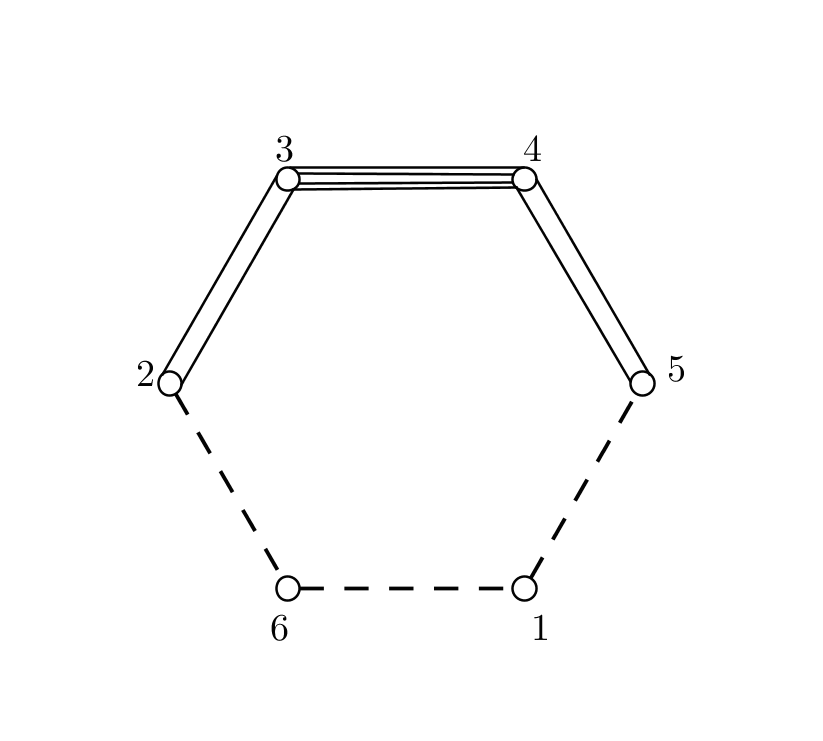}

\end{center}

It determines a bounded three-dimensional  Coxeter polyhedron that
is a tetrahedron with two cropped vertices.
We observe that the roots $a_2$, $a_3$ and $a_6$ determine the group
generated by ``bad'' reflections. This group is infinite, since
the corresponding subdiagram contains a dotted edge. Therefore, the
lattice $L(10)$ is reflective, but not $1.2$-reflective.
\end{proof}

Thus, only two of the ten maximal anisotropic lattices picked
in the process of solving our problem are $1.2$-reflective. These lattices are
$L(1)$ and $L(2)$, hence Theorem \ref{osn-res} is now proven.

\newpage

\end{document}